\title[Jeu de taquin, rectification, and ultradiscrete KP]{Jeu de taquin, uniqueness of rectification,\\
and ultradiscrete KP}
\author{Shinsuke Iwao}
\address{Department of Mathematics, Tokai University, 4-1-1, Kitakaname, Hiratsuka, Kanagawa 259-1292, Japan.}
\email{iwao@tokai.ac.jp}
\date{\today}
\newtheorem{thm}{Theorem}[section]
\newtheorem{prop}[thm]{Proposition}
\newtheorem{lemma}[thm]{Lemma}
\newtheorem{defi}[thm]{Definition}
\newtheorem{example}[thm]{Example}
\newtheorem{rem}[thm]{Remark}
\newtheorem{cor}[thm]{Corollary}
\def\RR{\mathord{\mathbb{R}}}
\def\ZZ{\mathord{\mathbb{Z}}}
\def\kasane#1#2{\genfrac{}{}{0pt}{2}{#1}{#2}}
\def\la{\leftarrow}
\renewcommand\tilde[1]{\widetilde{#1}}
\newcommand{\Tableau}[1]{%
\def\emp{\emptyset\bl}
{\text{\tableau[sY]{#1}}}%
}%
\newcommand{\bm}[1]{\mbox{\boldmath{$#1$}}}
\newcommand{\ktableau}[1]{%
\def\k{\lower-1.4ex\hbox{$k$}}
\def\b{\lower1ex\hbox{$\circ$}}
\tableau{#1}%
}
\def\h@iti#1{%
\bgroup%
\let\\=\cr%
\def\r{\hbox to 0pt{$\to$}}%
\def\d{\vbox to 0pt{\hbox to 0pt{$\searrow$}}}%
\vcenter{\tabskip=0pt\halign{&$##$\hspace{10pt}\cr#1\crcr}}%
\egroup%
}
\newcommand{\haiti}[1]{\hspace{4.5pt}\h@iti{#1}\hspace{-4.5pt}}
\def\sayou#1{%
\bgroup%
\let\\=\cr%
\def\+{\hbox{\lower 5pt\hbox{\scalebox{3.5}{$+$}}}}%
\def\-{\hbox{\lower 5pt\hbox{\scalebox{3.5}{$+$}\lower -2pt\hbox to 0pt{\hspace{-19.9pt}\scalebox{2.5}{$\circ$}}}}}%
\vcenter{\tabskip=0pt\halign{&\hfil$##$\hfil\cr#1\crcr}}%
\egroup%
}
\def\o#1{\overline{#1}}
\begin{document}

\maketitle

\begin{abstract}
In this paper, we study tropical-theoretic aspects of the ``rectification algorithm'' on skew Young tableaux.
It is shown that the algorithm is interpreted as a time evolution of some tropical integrable system.
By using this fact, we construct a new combinatorial map that is essentially equivalent to the rectification algorithm.
Some of properties of the rectification can be seen more clearly via this map.
For example, the uniqueness of a rectification boils down to an easy combinatorial problem.
Our method is mainly based on the two previous researches: the theory of geometric tableaux by Noumi-Yamada, and the study on the relationship between jeu de taquin slides and the ultradiscrete KP equation by Mikami and Katayama-Kakei.
\end{abstract}

\section{Introduction}\label{sec:1}

The {\it tropicalization} is a procedure to translate mathematical statements (such as propositions, equations, formulas, \textit{etc}.) 
written in the ``language of rings'' into the ``language of semi-fields,'' where the addition, the multiplication, and the multiplicative inverse are transformed as
\[
a+b\mapsto \min[A,B],\qquad ab\mapsto A+B,\qquad a^{-1}\mapsto -A.
\]
For example, rational maps correspond to piecewise linear maps.

In 2001, Berenstein and Kirillov~\cite{bernsteinRSK2001} showed that the Robinson-Schensted-Knuth correspondence (RSK correspondence), a crucial bijection in the theory of Young tableaux, can be expressed as a piecewise linear map that is related with the crystal basis.
After that, Kirillov~\cite{2001phco.conf...82K} introduced the {\it geometric RSK correspondence} (originally, tropical RSK correspondence\footnote{The word ``tropical'' nowadays has a different meaning.
Many researchers seem to prefer to use the ``geometric RSK correspondence'' instead.}), which is a rational map obtained by ``lifting'' Berenstein-Krillov's piecewise linear map.
This correspondence was studied further by Noumi-Yamada~\cite{noumi2004} in terms of tropical integrable systems.
In these literature, the method of tropicalization plays a key role in deriving explicit formulas.
Such techniques were originally refereed to as the ``tropical approach~\cite{2001phco.conf...82K,noumi2004}.''

Recently, Mikami~\cite{mikami2012en} and Katayama-Kakei~\cite{kakei2015en} found an interesting relationship between \textit{jeu de taquin slides} and the \textit{tropical KP equation}.
What is interesting is the fact that their correspondence is (probably) independent of Noumi-Yamada's correspondence.
For this reason, Young tableaux are viewed as a significant example that admits (at least) two independent realizations by tropical integrable systems\footnote{The {\it Takahashi-Satsuma Box-Ball system}~\cite{takahashi1990soliton} is a good example that admits two tropical realizations: one is the \textit{ultradiscrete Toda equation}, and the other is the \textit{ultradiscrete KdV equation}. 
}.

By using these facts, we present a new approach for solving problems on combinatorics of Young tableaux.
In this paper, we introduce a new combinatorial map that is essentially equivalent to the \textit{rectification algorithm}~\cite{fulton_1996} in terms of \textit{tropical}(=ultradiscrete) \textit{relativistic Toda equation}~\cite{iwao2018discrete}.
This tropical-theoretic realization provides a different interpretation, where some properties of the algorithm can be seen clearly.
For instance, through this map, a proof of {\it the uniqueness of a rectification}, which is probably first nontrivial theorem in the theory of Young tableaux, boils down to an easy combinatorial problem.

For another application, we would mention the author's related work~\cite{iwao2018Tropical} on the \textit{shape equivalence} and the \textit{Littlewood-Richardson correspondence}.
We expect further researches will reveal deeper relationships between combinatorics and tropical integrable systems.

\subsubsection*{Contents of the paper}

This work is inspired by the recent studies of Mikami~\cite{mikami2012en} and Katayama-Kakei~\cite{kakei2015en}(see \S\ref{sec:2}, Theorem \ref{thm:kakei}) on relationships between {jeu de taquin slides} (cf.\,\S \ref{sec:appB}) and the ultradiscrete KP equation (\ref{eq:udKP})\footnote{In the paper \cite{kakei2015en}, they considered standard skew tableaux only.
However, their proof is valid for general skew tableaux without any changes.}.

Under a change of variables, the ultradiscrete KP equation (\ref{eq:udKP}) is transformed to the recursive form (\ref{eq:evolution}), which is more suitable for our study.
We give a proof of this fact by the method of \textit{tropicalization} (\S \ref{sec:3}).
What should be noted here is that {\it the formal tropicalization of a true proposition is not always true}.
For example, ``$a+b=a+c$ implies $b=c$'' does not mean ``$\min[A,B]=\min[A,C]$ implies $B=C$.''
In many cases, such problems are easily avoided by direct methods --- for example, by simplifying expressions --- but it sometimes cause errors that are difficult to find.
In \S \ref{sec:3.2}, we present a systematic approach to such problems by means of \textit{mathematical logic}.
As an application, a formal proof of (\ref{eq:evolution}) is given in \S \ref{sec:3.3}.

In \S \ref{sec:4}, we introduce a diagrammatic algorithm for calculating the time evolution of (\ref{eq:evolution}).
This algorithm also provides a good reinterpretation of jeu de taquin slides. 
(See the example given in \S \ref{sec:4.3}.)

In \S \ref{sec:5} and \S \ref{sec:6}, we give an alternative proof of the ``uniqueness of a rectification~\cite[\S 1, Claim 2]{fulton_1996},'' which is probably the first non-trivial theorem in the course of combinatorics of Young tableaux.
The key of the proof is a commutative diagram 
\begin{equation*}
\def\myspace{4em}%
\xymatrix{
\hspace{\myspace}\{Q,W\}\hspace{\myspace}\ar@{|->}[r]^{\mathrm{Jeu\ de\ taquin} (\S \ref{sec:4},\S \ref{sec:appA.2})} 
\ar@{|->}[d]_{
\substack{
\mathrm{Row\ insertion}\\
(\simeq \mathrm{Geometric\, tableau})\\
(\S \ref{sec:5.2},\S \ref{sec:appA.2})
}
} 
& \hspace{\myspace}\{Q',W'\}\hspace{\myspace}\ar@{|->}[d]^{
\mathrm{Row\ insertion}
}\\
\hspace{\myspace}\{P,W\}\hspace{\myspace}\ar@{|->}[r]^{
\substack{
\mathrm{A\ new\ map\ constructed}\\
\mathrm{via\ tropical\ integrable\ systems}\\
(\S \ref{sec:5.4})
}
} & \hspace{\myspace}\{P',W'\}\hspace{\myspace}
},
\end{equation*}
where $W$ is a skew tableau (\S \ref{sec:4.1}), $W'$ is a rectification of $W$ (\S \ref{sec:5.1}), $Q$ is a sequence of row numbers from where jeu de taquin slides start (\S \ref{sec:5.1}), and $P$ is the \textit{$P$-tableau} (\S \ref{sec:5.3}) associated with $Q$.
See \S \ref{sec:5.5}.
From this diagram, we find that the rectification depends only on the choice of a $P$-tableau.
Then the uniqueness of a rectification boils down to a relatively easy lemma (Corollary \ref{cor:U(mu)}) that states a $P$-tableau of given shape is unique.

In the appendix, we give a short list of basic definitions in mathematical logic in \S \ref{sec:appA}. 
A brief introduction to combinatorics of Young tableaux is given in \S \ref{sec:appB}.

\subsubsection*{Notations}

In this article, we follow the convention of Fulton's textbook~\cite{fulton_1996}.
Let $\lambda=(\lambda_1\geq \lambda_2\geq \dots\geq \lambda_\ell>0)$ be a Young diagram.
A {\it semi-standard tableau of shape $\lambda$} is obtained by filling the boxes in $\lambda$ with a number according to the following rules:
(i) in each row, the numbers are weakly increasing from left to right,
(ii) in each column, the numbers are strongly increasing from top to bottom.
A semi-standard tableau is often referred to as a {\it tableau} shortly.
A tableau with $n$ boxes is called {\it standard} if it contains distinct $n$ numbers $1,2,\dots,n$.
Let $\lambda/\mu$ be a skew diagram, where $\lambda$ and $\mu$ are Young diagrams with $\mu\subset \lambda$.
A {\it skew (semi-standard) tableau of shape $\lambda/\mu$} is obtained by filling the boxes with a number according to the same rules as for tableaux.
If a skew tableau with $n$ boxes contains distinct $n$ numbers $1,\dots,n$, it is said to be {\it standard}.
See \S\ref{sec:appB} for other definitions.

\subsection*{About this article}
Most part of this paper (except for \S \ref{sec:5.4}, \S \ref{sec:5.5}, and \S \ref{sec:6}) is an English translation of the author's unpublished manuscript ``S.~Iwao, \textit{Jeu de taquin, uniqueness of a rectification, and ultradiscrete KP}'' written in Japanese\footnote{
The original manuscript (in Japanese) had been submitted to \textit{RIMS K\^{o}ky\^{u}roku Bessatsu}, but was withdrawn (03/07/2019) because the first version of the manuscript had contained a major mathematical error.}.

\section{Ultradiscrete (tropical) KP equation and jeu de taquin}\label{sec:2}

In this section, we introduce the result of Katayama and Kakei~\cite{kakei2015en} in 2015.
The definition of the terms {\it jeu de taquin slide, inside corner, outside corner, etc.}~can be found in \S \ref{sec:appB}.

Let us consider the discrete KP equation
\begin{equation}\label{eq:discreteKP}
f^t_{i+1,j+1}f^{t+1}_{i-1,j}-f^t_{i,j}f^{t+1}_{i,j+1}+f^t_{i,j+1}f^{t+1}_{i,j}=0.
\end{equation}
According to the definition of tropicalization introduced in \S \ref{sec:1}, the ``tropicalization of (\ref{eq:discreteKP})'' should be the following piecewise linear equation:
\begin{equation}\label{eq:udKP}
F_{i,j}^t+F_{i,j+1}^{t+1}=\max\left[
F_{i+1,j+1}^t+F_{i-1,j}^{t+1},
F_{i,j+1}^t+F_{i,j}^{t+1}
\right].
\end{equation}
The following is the main theorem of \cite{kakei2015en}:
\begin{thm}[\cite{kakei2015en}. (See also \cite{mikami2012en})]\label{thm:kakei}
Let $\{S^t\}_{t=0,1,2,\dots}$ be a collection of skew tableaux such that $S^{t+1}$ is obtained from $S^t$ by a jeu de taquin slide, carried out from any inside corner.
Define
\[
F_{i,j}^t=
\sharp
\left(
\mbox{
$0,1,2,\dots,j$'s contained in the $1^\mathrm{st},2^\mathrm{nd},\dots,i^{\mathrm{th}}$ rows of $S^t$ }
\right),
\]
where an empty box is regarded as a box with $0$.
Then $\{F_{i,j}^t\}_{i\geq 1,j\geq 0,t\geq 0}$ satisfies the ultradiscrete KP equation $(\ref{eq:udKP})$.
\end{thm}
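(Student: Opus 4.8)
The plan is to verify the identity $(\ref{eq:udKP})$ directly from the combinatorial definition of $F_{i,j}^t$ by tracking exactly which boxes change under a single jeu de taquin slide. Fix $t$, write $S=S^t$ and $S'=S^{t+1}$, and let the slide in question start from the inside corner located at row $r$ and column $c$. The key structural observation is that a jeu de taquin slide moves the empty box along a lattice path down-and-to-the-right; along this path, certain entries are shifted by one box (each either up by one row, or left by one column), and the rest of the tableau is untouched. Consequently, for the quantities $F_{i,j}^t$ — which count entries $\le j$ in the first $i$ rows — the difference $F_{i,j}^{t+1}-F_{i,j}^t$ can only be $0$ or $\pm 1$, and it is nonzero only when the pair $(i,j)$ ``sees'' one of the moved entries in the specific way that a shift across the $i$-th row boundary with value at the $j$/$j{+}1$ threshold produces. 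So the first step is to set up notation for the sliding path and classify the boxes on it.

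Second, I would reformulate $(\ref{eq:udKP})$. Moving everything to one side, the equation asserts
\begin{equation*}
\min\!\left[\,(F_{i+1,j+1}^t - F_{i,j}^t) + (F_{i-1,j}^{t+1} - F_{i,j+1}^{t+1}),\ (F_{i,j+1}^t - F_{i,j}^t) + (F_{i,j}^{t+1} - F_{i,j+1}^{t+1})\,\right] = 0,
\end{equation*}
after rearranging; equivalently, the max of the two bracketed expressions in $(\ref{eq:udKP})$ equals $F_{i,j}^t+F_{i,j+1}^{t+1}$. The point of this rewriting is that each of the six terms, and hence each difference, has a transparent combinatorial meaning: $F_{i,j+1}^t-F_{i,j}^t$ counts the entries equal to $j{+}1$ in the first $i$ rows of $S$, and $F_{i,j}^{t+1}-F_{i,j+1}^{t+1}$ is minus the number of entries equal to $j{+}1$ in the first $i$ rows of $S'$ (an empty box being read as $0$, so only $j\ge 0$ matters and the boundary conventions at $j=0$ and at row $0$ are consistent). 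So the identity becomes a statement comparing counts of $(j{+}1)$-entries and a mild cross term between $S$ and $S'$ in the first $i$ (resp. $i\pm1$) rows.

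Third comes the case analysis. One distinguishes cases according to the position of $(i,j)$ relative to the sliding path: whether the path crosses the horizontal line between row $i$ and row $i{+}1$ (and if so, with an entry of value $\le j$, $=j{+}1$, or $>j{+}1$), whether it crosses the line below row $i{-}1$, and whether a vertical move of the path occurs inside columns $\le j$ or $>j$. Because each nonzero difference contributes $\pm1$, in every case both bracketed quantities in the $\min$ form above turn out to lie in $\{0,1\}$ (or $\{-1,0\}$ symmetrically), with at least one of them equal to $0$; this is precisely the max-identity. This is where the ``formal tropicalization need not be true'' warning in the introduction is relevant — one genuinely has to check the combinatorics rather than quote $(\ref{eq:discreteKP})$ — but since the KP equation here is a local three-term relation on a $2\times 2$ block of indices $\{i{-}1,i,i{+}1\}\times\{j,j{+}1\}$, the number of genuinely distinct cases is small.

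The main obstacle I anticipate is the bookkeeping at the two kinds of ``corners'' of the sliding path: the point where the path turns from a horizontal to a vertical step (or vice versa), since there the entry that moves up and the entry that moves left interact, and one must make sure the net change in $F_{i,j}$ is counted once, not twice, and with the correct sign. Getting the boundary conventions right — empty boxes as $0$, the index ranges $i\ge 1$, $j\ge 0$, and the initial/degenerate rows — is the other place where a sign error could hide. I would therefore carry out the turning-corner case with particular care, and sanity-check the final identity on a small explicit skew tableau (of the kind given in \S\ref{sec:4.3}) before declaring the case analysis complete.
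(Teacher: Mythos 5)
First, a point of orientation: the paper itself contains no proof of Theorem \ref{thm:kakei}. It is imported verbatim from Katayama--Kakei \cite{kakei2015en} (see also \cite{mikami2012en}), with only a footnote remarking that the cited proof, written for standard skew tableaux, goes through unchanged in general. So there is no internal argument to compare yours against; what you have written is in effect a reconstruction of the strategy of the cited source, and that strategy --- track the hole's down-and-right lattice path through a single slide, note that each $F_{i,j}$ changes by a bounded amount, and verify a local identity on the index block $\{i-1,i,i+1\}\times\{j,j+1\}$ by cases --- is indeed the right and standard one.

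Two concrete issues keep the plan from being a proof. (i) The rearrangement is inconsistent: subtracting $F_{i,j}^t+F_{i,j+1}^{t+1}$ from both sides of (\ref{eq:udKP}) gives $\max[A-C,\,B-C]=0$, not $\min[A-C,\,B-C]=0$, so the target of the case analysis is that \emph{both} bracketed differences are $\le 0$ with at least one equal to $0$; your later claim that they ``lie in $\{0,1\}$ \dots with at least one equal to $0$'' would not yield the identity (the correct range is $\{-1,0\}$, as one checks on Example \ref{example:first}, where the pairs $(A-C,B-C)$ come out as $(0,0)$, $(-1,0)$, $(0,-1)$). For the second difference non-positivity is immediate, since it equals $\sharp\{(j{+}1)\mbox{'s in rows}\le i\mbox{ of }S^t\}-\sharp\{(j{+}1)\mbox{'s in rows}\le i\mbox{ of }S^{t+1}\}$ and boxes only move up or left in a slide; the first difference mixes rows $i-1,i,i+1$ and both times, and its non-positivity is exactly the nontrivial half. (ii) The case analysis, which is the entire mathematical content, is deferred. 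To make it finite and clean you should isolate the structural lemma that drives it: writing $\lambda^{(j)}(S)$ for the subdiagram of entries $\le j$, one slide takes $\lambda^{(j)}(S^t)$ to $\lambda^{(j)}(S^t)$ minus the inside corner plus the position $B_j$ of the hole after all entries $\le j$ have been processed, and the row indices of $B_0,B_1,B_2,\dots$ are weakly increasing (this is precisely the $Q_{i,j}$ of \S\ref{sec:4.2}). All six terms of (\ref{eq:udKP}) are then determined by the rows of the inside corner, of $B_j$, and of $B_{j+1}$ relative to $i$, and the identity follows from that monotonicity. Without this lemma, the assertion that ``the number of genuinely distinct cases is small'' and the turning-corner bookkeeping you rightly flag as the danger point remain unsubstantiated.
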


\begin{example}\label{example:first}
Consider the sequence of jeu de taquin slides displayed below.
The gray boxes denote the inside corners from which a jeu de taquin slide is carried out.
\[
\Tableau{\bl & \gray & 1\\
\bl & 2 & 2\\
3 & 3}\quad\to\quad
\Tableau{\bl & 1 & 2\\
\gray & 2 \\
3 & 3}\quad\to\quad
\Tableau{\gray & 1 & 2\\
2 & 3 \\
3}\quad\to\quad
\Tableau{1 & 2 \\
2 & 3 \\
3}
\]
Let $S^0,S^1,S^2,S^3$ denote these skew tableaux.
The first $3\times 4$ part of the matrix $F^t=(F_{i,j}^t)_{\kasane{i\geq 1}{j\geq 0}}$ is expressed as
\[
F^0=\left(
\haiti{
2 & 3 & 3 & 3 \\
3 & 4 & 6 & 6 \\
3 & 4 & 6 & 8 
}
\right),\
F^1=\left(
\haiti{
1 & 2 & 3 & 3 \\
2 & 3 & 5 & 6 \\
2 & 3 & 5 & 8 
}
\right),
\]
\[
F^2=\left(
\haiti{
1 & 2 & 3 & 3 \\
1 & 2 & 4 & 5 \\
1 & 2 & 4 & 6 
}
\right),\
F^3=\left(
\haiti{
0 & 1 & 2 & 2 \\
0 & 1 & 3 & 4 \\
0 & 1 & 3 & 5 
}
\right).
\]
\end{example}

Putting 
\begin{equation}\label{eq:transform}
I_{i,j}^t=\frac{f_{i-1,j}^{t}f_{i,j}^{t+1}}{f_{i,j}^{t}f_{i-1,j}^{t+1}},
\qquad
V_{i,j}^t=\frac{f_{i-1,j}^{t}f_{i+1,j+1}^{t}}{f_{i,j}^{t}f_{i,j+1}^{t}},
\end{equation}
we rewrite the discrete KP equation (\ref{eq:discreteKP}) as
\begin{equation}\label{eq:discreteToda}
\begin{cases}
I_{i,j}^tV_{i,j}^{t+1}=I_{i+1,j+1}^tV_{i,j}^{t},\\
I_{i,j}^t-V_{i-1,j}^{t+1}=I_{i,j+1}^t-V_{i,j}^{t}.
\end{cases}
\end{equation}
Moreover, by putting
\[
R_j^t:=\left(
\begin{array}{ccccc}
I_{1,j}^t & 1 &  &   \\ 
 & I_{2,j}^t & 1 &  \\ 
 &  & I_{3,j}^t & \ddots   \\ 
 &  &  & \ddots 
\end{array} 
\right),\
L_j^t:=\left(
\begin{array}{ccccc}
1 &  &  &   \\ 
-V_{1,j}^t & 1 & &  \\ 
 & -V_{2,j}^t & 1 &    \\ 
 &  & -V_{3,j}^t & 1 \\ 
 &  &  & \ddots &\ddots
\end{array} 
\right)^{-1},
\]
(\ref{eq:discreteToda}) is transformed into the matrix form:
\begin{equation}\label{eq:Laxform}
R_j^tL_j^t=L_j^{t+1}R_{j+1}^{t}.
\end{equation}
Equation (\ref{eq:Laxform}) is often refereed to as the {\it discrete $(2+1)$-dimensional Toda equation}. 
It is easily checked that (\ref{eq:discreteToda}) is equivalent to the subtraction-free form:
\begin{equation}\label{eq:subtractionfree}
I_{i+1,j+1}^t=\frac{I_{i+1,j}^t+V_{i+1,j}^t}{I_{i,j}^t+V_{i,j}^t}I_{i,j}^t,\qquad
V_{i,j}^{t+1}=\frac{I_{i+1,j}^t+V_{i+1,j}^t}{I_{i,j}^t+V_{i,j}^t}V_{i,j}^t.
\end{equation}

We now ``tropicalize'' (\ref{eq:transform}) and (\ref{eq:subtractionfree}).
Let $Q_{i,j}^t$ and $W_{i,j}^t$ be the tropicalization of $I_{i,j}^t$ and $V_{i,j}^t$, respectively.
Then the tropicalization of (\ref{eq:transform}) is
\begin{equation}\label{eq:defofQW}
\begin{aligned}
&Q_{i,j}^t=F_{i,j}^t+F_{i-1,j}^{t+1}-F_{i-1,j}^t-F_{i,j}^{t+1},\\
&W_{i,j}^t=F_{i,j}^t+F_{i,j+1}^{t}-F_{i-1,j}^t-F_{i+1,j+1}^{t},
\end{aligned}
\end{equation}
and that of (\ref{eq:subtractionfree}) is
\begin{equation}\label{eq:evolution}
\begin{aligned}
&Q_{i+1,j+1}^t=(\min[Q_{i+1,j}^t,W_{i+1,j}^t]-\min[Q_{i,j}^t,W_{i,j}^t])+Q_{i,j}^t,\\
&W_{i,j}^{t+1}=(\min[Q_{i+1,j}^t,W_{i+1,j}^t]-\min[Q_{i,j}^t,W_{i,j}^t])+W_{i,j}^t.
\end{aligned}
\end{equation}

On the analogy of ``\{(\ref{eq:discreteKP}) and (\ref{eq:transform})\} $\Rightarrow$ (\ref{eq:subtractionfree}),''  it would be natural to expect that the implication ``\{(\ref{eq:udKP}) and (\ref{eq:defofQW})\} $\Rightarrow$ (\ref{eq:evolution})'' is true.
Note, however, that it is not obvious at this stage. (See \S \ref{sec:1}.)

\begin{example}
For the skew tableaux in Example \ref{example:first}, we have
\[
Q^0=\left(
\haiti{
1 & 1 & 0 & 0 \\
0 & 0 & 1 & 1 \\
0 & 0 & 0 & 0
}
\right),\ 
Q^1=\left(
\haiti{
0 & 0 & 0 & 0 \\
1 & 1 & 1 & 0 \\
0 & 0 & 0 & 1
}
\right),\ 
Q^2=\left(
\haiti{
1 & 1 & 1 & 1 \\
0 & 0 & 0 & 0 \\
0 & 0 & 0 & 0
}
\right),
\]
\[
W^0=\left(
\haiti{
1 & 0 & 0 & 0 \\
1 & 1 & 1 & 1 \\
0 & 0 & 0 & 2
}
\right),\ 
W^1=\left(
\haiti{
0 & 0 & 1 & 1 \\
1 & 1 & 0 & 0 \\
0 & 0 & 0 & 2
}
\right),\ 
W^2=\left(
\haiti{
1 & 1 & 1 & 1 \\
0 & 0 & 0 & 1 \\
0 & 0 & 0 & 1
}
\right).
\]
It is directly checked that $(\ref{eq:evolution})$ is satisfied.
\end{example}

\section{Tropical approach}\label{sec:3}

As we have seen in the previous section, it would be natural to expect that propositions and theorems written in ``the language of rings'' imply the similar facts written in ``the language of semi-fields,'' while it is not generally true.
In this section, we propose a formal method to deal with such ideas systematically.

We review the ``naive'' principle of tropicalization in \S\ref{sec:3.1}, and introduce its formal counterpart in \S\ref{sec:3.2}.
As an application, we give a formal proof of ``\{(\ref{eq:udKP}) and (\ref{eq:defofQW})\} $\Rightarrow$ (\ref{eq:evolution})'' in \S\ref{sec:3.3}.
This section can be skipped if the reader is interested only in combinatorics.

\subsection{``Naive'' tropical approach}\label{sec:3.1}

A real polynomial $f(x)\in \RR[x_1,\dots,x_N]$ is called {\it subtraction-free} if it is expressed as $f(x)=\sum_{I}c_{I}x^{I}$, where $x^I=x_1^{i_1}\cdots x_N^{i_N}$ and $c_I\geq 0$ for all $I=(i_1,\dots,i_N)$．

\begin{defi}
Let $f(x)=\sum_{I}c_{I}x^{I}$ be a subtraction-free polynomial.
The {\it tropicalization of $f(x)$} is the piecewise linear function $\overline{f}(X_1,\dots,X_n)$ defined as 
$$\overline{f}(X_1,\dots,X_N)=\min_{\{I:c_I\neq 0\}}[I_1\cdot X_1+\cdots+I_N\cdot X_N],$$
where $\min\emptyset=+\infty$.
\end{defi}

\begin{prop}[``Naive'' principle of tropicalization]\label{principle:trop}
Let $f(x_1,\dots,x_n)$ be a subtraction-free polynomial.
By assuming $x_i= O(e^{-\frac{X_i}{\epsilon}})$ $(\epsilon \downarrow 0)$, where $\epsilon$ is a positive parameter, we have
$-\lim\limits_{\epsilon\downarrow 0}\epsilon \log f(x_1,\dots,x_n)=\overline{f}(X_1,\dots,X_n)$.
\end{prop}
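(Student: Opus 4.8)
The plan is to establish the limit in Proposition~\ref{principle:trop} by a direct estimate, using the fact that $f$ is a finite sum of monomials with nonnegative coefficients. Write $f(x)=\sum_{I}c_I x^I$ with $c_I\geq 0$, and let $J\subset\{I:c_I\neq 0\}$ be the (nonempty, assuming $f\not\equiv 0$) index set of monomials actually present; the degenerate case $f\equiv 0$ gives $\log 0=-\infty$ and matches $\min\emptyset=+\infty$ after the sign flip, so I would dispose of it first. Substituting $x_i=O(e^{-X_i/\epsilon})$ means: for each $i$ there are constants $0<a_i\leq b_i$ and $\epsilon_0>0$ with $a_i e^{-X_i/\epsilon}\leq x_i\leq b_i e^{-X_i/\epsilon}$ for $0<\epsilon<\epsilon_0$ (I would state the hypothesis in this two-sided form, which is the natural reading of ``$x_i=O(e^{-X_i/\epsilon})$'' together with the tacit assumption that the $x_i$ are genuinely of that order). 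Then each monomial satisfies $c_I x^I \asymp c_I (\prod a_i^{I_i})\, e^{-\langle I,X\rangle/\epsilon}$ up to the fixed multiplicative constants, where $\langle I,X\rangle=\sum_i I_i X_i$.

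First I would bound $f$ from below by its single dominant term. Let $m=\min_{I\in J}\langle I,X\rangle=\overline f(X)$ and pick $I^\star\in J$ achieving it. Since all summands are nonnegative, $f(x)\geq c_{I^\star}x^{I^\star}\geq C_- e^{-m/\epsilon}$ for a positive constant $C_-$ independent of $\epsilon$, giving $-\epsilon\log f(x)\leq m-\epsilon\log C_-\to m$. Next I would bound $f$ from above: $f(x)=\sum_{I\in J}c_I x^I\leq \sum_{I\in J} C_I e^{-\langle I,X\rangle/\epsilon}\leq |J|\cdot C_+ e^{-m/\epsilon}$, since $e^{-\langle I,X\rangle/\epsilon}\leq e^{-m/\epsilon}$ for every $I\in J$ (because $\langle I,X\rangle\geq m$ and $\epsilon>0$); here $C_+=\max_{I\in J}C_I$. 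Hence $-\epsilon\log f(x)\geq m-\epsilon\log(|J|C_+)\to m$. Combining the two estimates and letting $\epsilon\downarrow 0$ yields $\lim_{\epsilon\downarrow0}\bigl(-\epsilon\log f(x)\bigr)=m=\overline f(X)$, which is the claim.

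The step I expect to require the most care is pinning down the precise meaning of the hypothesis ``$x_i=O(e^{-X_i/\epsilon})$'': the one-sided $O$-bound alone only yields the upper estimate on $f$ and hence $-\epsilon\log f(x)\geq \overline f(X)+o(1)$, i.e.\ $\liminf\geq\overline f(X)$; to get the matching $\limsup\leq\overline f(X)$ one genuinely needs a lower bound $x_i\geq a_i e^{-X_i/\epsilon}$ on at least enough coordinates to dominate the term $I^\star$. Since Proposition~\ref{principle:trop} is used only heuristically (the paper explicitly warns that formal tropicalization of a true statement need not be true, and defers the rigorous treatment to \S\ref{sec:3.2}), I would simply adopt the two-sided convention $x_i\sim e^{-X_i/\epsilon}$, i.e.\ $x_i=e^{-X_i/\epsilon}$ exactly (or $=e^{-X_i/\epsilon}(1+o(1))$), under which both estimates above go through verbatim and the proof is a two-line sandwich. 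No deeper obstacle arises: the argument is entirely elementary once the asymptotic regime is fixed, because a finite sum of positive terms is controlled above and below by a constant multiple of its largest term, and $-\epsilon\log$ of that largest term is exactly $\langle I^\star,X\rangle+o(1)$.
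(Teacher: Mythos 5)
Your proof is correct. The paper itself offers no proof of Proposition~\ref{principle:trop} at all: it is stated as a ``naive'' folklore principle and used only as motivation before the rigorous framework of \S\ref{sec:3.2} is introduced, so there is nothing in the text to compare your argument against line by line. What you supply is the standard sandwich estimate: since all coefficients are nonnegative, a finite sum of positive terms is bounded below by its dominant monomial and above by the number of terms times that monomial, and $-\epsilon\log$ of either bound converges to $\min_{I}\langle I,X\rangle=\overline f(X)$. That is exactly the right argument, and it is the one implicitly assumed by the paper. Your observation about the hypothesis is also well taken: a genuine one-sided $O$-bound only yields $\liminf_{\epsilon\downarrow 0}(-\epsilon\log f)\geq\overline f(X)$, and the matching upper bound on the $\limsup$ requires a lower bound on the $x_i$ of the same exponential order. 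Your two-sided reading is the intended one, and it is consistent with the paper's own later formalization in Example~\ref{example:trop-structure}, where the domain $M$ consists of positive functions with $-\lim_{\epsilon\downarrow 0}\epsilon\log f(\epsilon)$ finite, i.e.\ functions of exact exponential order. The only cosmetic remark is that the degenerate case $f\equiv 0$ is excluded by the paper's convention that the tropicalization ranges over $\{I:c_I\neq 0\}$, so you could simply assume $f$ has at least one nonzero coefficient rather than arguing the $\min\emptyset=+\infty$ case separately; either way the proof stands.
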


\begin{example}\label{example:good-example}
Assume $a,b,c,d,e,f,g$ satisfy $a=b+c$, $d=e+g$, and $f=bg$, which imply, for example, $ad+ce=ae+cd+f$.
The tropicalization of this statement is 
\begin{align*}
(A=\min[B,C])\wedge(D=\min[E,G])\wedge (F=B+G)\ \\
\Rightarrow\ \min[A+D,C+E]=\min[A+E,C+D,F].
\end{align*}
We will give a proof of this proposition by the method of tropicalization.
Assume $A,B,\dots,F\in \RR$ satisfy the assumption of the implication.
Set $b(\epsilon)=e^{-\frac{B}{\epsilon}}$, $c(\epsilon)=e^{-\frac{C}{\epsilon}}$, $e(\epsilon)=e^{-\frac{E}{\epsilon}}$, and $g(\epsilon)=e^{-\frac{G}{\epsilon}}$, where $\epsilon$ is a positive parameter.
Let us define $a(\epsilon):=b(\epsilon)+c(\epsilon)$.
$d(\epsilon):=e(\epsilon)+g(\epsilon)$, and $f(\epsilon):=d(\epsilon)g(\epsilon)$.
From the principle of tropicalization (Proposition \ref{principle:trop}), the tropicalization of  $a(\epsilon),b(\epsilon),\dots,f(\epsilon)$ coincides with $A,B,\dots,F$, respectively.
Then the desired proposition follows from $a(\epsilon)d(\epsilon)+c(\epsilon)e(\epsilon)=a(\epsilon)e(\epsilon)+c(\epsilon)d(\epsilon)+f(\epsilon)$.
\end{example}

\begin{example}\label{ex:counter}
The subtraction-free equation $a+b=a+c$ implies $b=c$, but $\min[A,B]=\min[A,C]$ does not imply $B=C$.
Indeed, $(A,B,C)=(0,1,2)$ is a counterexample.
\end{example}

Example \ref{ex:counter} is a simplest and typical example where the tropicalization causes an error.

\subsection{Tropical approach in terms of first-order logic}\label{sec:3.2}

In this section, we rephrase the method of tropicalization in terms of first-order logic.
Basic definitions and notions of first-order logic are introduced in \S \ref{sec:appA}.
For readers who are interested in mathematical logic, we recommend the standard textbooks~\cite{marker2006model,tent2012course}.

Let 
$$
\mathcal{L}=\{f_1,f_2,\dots,R_1,R_2,\dots, c_1,c_2,\dots  \}
$$
be a language, where $f_i$ is a function symbol, $R_i$ is a relation symbol, and $c_i$ is a constant symbol.
Consider the two $\mathcal{L}$-structures:
\[
\begin{gathered}
\mathcal{M}=(M,f_1,f_2,\dots,R_1,R_2,\dots,c_1,c_2,\dots),\\
\o {\mathcal{M}}=(\overline{M},\o {f_1},\o{f_2},\dots,\o{R_1},\o{R_2},\dots,\o{c_1},\o{c_2},\dots)
\end{gathered}
\]
($M$ is the domain of $\mathcal{M}$, and $\o{M}$ is the domain of $\o{\mathcal{M}}$) and a homomorphism
\[
M\to \o{M};\quad x\mapsto \overline{x}
\]
of $\mathcal{L}$-structures.

We use the following lemma, which we will prove in the appendix:
\begin{lemma}\label{prop:inherit}
For any negation-free\footnote{See \S \ref{sec:appA}, Definitions \ref{defi:formula} and \ref{def:nag-free-formula}.} $\mathcal{L}$-formula $\psi(x_1,\dots,x_n)$ and any element 
$(a_1,\dots,a_n)$ of $M^n$,
\begin{center}
$\mathcal{M}\models \psi(a_1,\dots,a_n)$\quad implies\quad $\o{\mathcal{M}}\models \o{\psi}(\o{a_1},\dots,\o{a_n})$.
\end{center}
\end{lemma}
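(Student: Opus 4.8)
The plan is to prove Lemma \ref{prop:inherit} by structural induction on the negation-free formula $\psi$, following exactly the recursive definition of $\mathcal{L}$-formulas. First I would recall that a homomorphism $h\colon M\to\o M$ of $\mathcal{L}$-structures commutes with the interpretation of function and constant symbols, so for every $\mathcal{L}$-term $t(x_1,\dots,x_n)$ and every $(a_1,\dots,a_n)\in M^n$ one has $\overline{t^{\mathcal{M}}(a_1,\dots,a_n)}=t^{\o{\mathcal{M}}}(\overline{a_1},\dots,\overline{a_n})$; this itself is a trivial induction on the term $t$ and will be invoked freely. (In the min-plus application this is precisely the statement that the tropicalization of $f(x_1,\dots,x_n)$ evaluated at $\overline{x}$ equals the limit produced by Proposition \ref{principle:trop}.)

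The base case concerns atomic formulas. For an equality $t_1=t_2$: if $\mathcal{M}\models t_1(a)=t_2(a)$ then applying $h$ and the term-compatibility just noted gives $t_1^{\o{\mathcal{M}}}(\overline a)=\overline{t_1^{\mathcal{M}}(a)}=\overline{t_2^{\mathcal{M}}(a)}=t_2^{\o{\mathcal{M}}}(\overline a)$, so $\o{\mathcal{M}}\models\overline{t_1=t_2}(\overline a)$. For a relational atom $R_i(t_1,\dots,t_k)$ one uses that $h$ being a homomorphism means $(b_1,\dots,b_k)\in R_i^{\mathcal{M}}$ implies $(\overline{b_1},\dots,\overline{b_k})\in R_i^{\o{\mathcal{M}}}$, combined again with term-compatibility. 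For the inductive step, a negation-free formula is built from atoms using only $\wedge$, $\vee$, $\exists$, and $\forall$ (this is the content of Definition \ref{def:nag-free-formula}), and I would treat each connective in turn: for $\psi=\psi_1\wedge\psi_2$ and $\psi=\psi_1\vee\psi_2$ the claim is immediate from the induction hypothesis applied to $\psi_1,\psi_2$ and the fact that $\overline{\psi_1\wedge\psi_2}=\overline{\psi_1}\wedge\overline{\psi_2}$ etc.; for $\psi=\exists y\,\varphi(y,x)$, a witness $b\in M$ with $\mathcal{M}\models\varphi(b,a)$ yields the witness $\overline b\in\o M$ by the induction hypothesis; for $\psi=\forall y\,\varphi(y,x)$ one needs that \emph{every} element of $\o M$ is hit — here I would use surjectivity of $h$, or else restrict the statement of the lemma to surjective homomorphisms, which is harmless since $x\mapsto\overline x$ in our setting has image all of $\o M$.

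The main obstacle is precisely the universal quantifier and, relatedly, pinning down exactly which formulas count as "negation-free": the lemma is false if implication $\to$ or $\forall$ over a non-surjective map is allowed, so the delicate point is to state Definition \ref{def:nag-free-formula} so that $\to$ and $\leftrightarrow$ are excluded (they hide negations) while $\forall$ is permitted only under the standing assumption that $h$ is onto — or, more cautiously, to drop $\forall$ from the grammar of negation-free formulas altogether, which is all that the application in \S\ref{sec:3.3} actually requires. I expect to record this as a remark rather than fight it, and to note that Example \ref{ex:counter} is exactly the obstruction: "$a+b=a+c\to b=c$" is not negation-free because the implication desugars to $\neg(a+b=a+c)\vee b=c$, so the lemma correctly refuses to transport it. Everything else is a routine bookkeeping induction.
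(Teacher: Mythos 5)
Your proof is correct and is essentially the same argument as the paper's (\S\ref{sec:appA.2}): structural induction over atomic formulas, $\wedge$, $\lor$, and $\exists$, with the atomic cases handled by the compatibility of the $\mathcal{L}$-morphism with terms and relations. Your concern about $\forall$ is already resolved in the paper by fiat: Definition \ref{def:nag-free-formula} carries a footnote explicitly excluding $\forall$ from the grammar of negation-free formulas, which is precisely the ``more cautious'' option you propose at the end.
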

\begin{proof}
See \S \ref{sec:appA.2}.
\end{proof}

\begin{prop}\label{prop:axiom-trop-principle}
Let $\mathcal{L}$-formulas $\varphi(x_1,\dots,x_n)$ and $\psi(x_1,\dots,x_n)$ satisfy:
\begin{enumerate}
\def\labelenumi{$(\theenumi)$.}
\setcounter{enumi}{-1}
\item $\psi(x_1,\dots,x_n)$ is negation-free.
\item For any $(A_1,\dots,A_n)\in \o{M}$ with $\overline{\mathcal{M}}\models\o{\varphi}(A_1,\dots,A_n)$, there exists some $(a_1,\dots,a_n)\in M^n$ that satisfies $\overline{a_i}=A_i$ and $\mathcal{M}\models\varphi(a_1,\dots,a_n)$,
\item 
$\mathcal{M}\models\forall x_1\dots\forall x_n(\varphi(x_1,\dots,x_n)\to \psi(x_1,\dots,x_n))$.
\end{enumerate}
Then, it follows that
\[
\o{\mathcal{M}}\models \forall X_1\dots\forall X_n(\o{\varphi}(X_1,\dots,X_n)\to \o{\psi}(X_1,\dots,X_n)).
\]
\end{prop}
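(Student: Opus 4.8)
The plan is to prove the conclusion directly from the definition of satisfaction of a universally quantified implication in $\o{\mathcal{M}}$: given a tuple satisfying $\o{\varphi}$ in $\o{\mathcal{M}}$, we descend to $\mathcal{M}$ via hypothesis $(1)$, apply hypothesis $(2)$ inside $\mathcal{M}$ to obtain $\psi$, and then lift back along the homomorphism $x\mapsto\o{x}$ by means of Lemma \ref{prop:inherit}. In effect the proposition is just the composite of these three facts; no idea is needed beyond combining them correctly. Note that this is the formal surrogate of the ``naive'' principle of tropicalization of \S\ref{sec:3.1}: hypothesis $(1)$ plays the role of the lifting $x_i=O(e^{-X_i/\epsilon})$, and the negation-freeness condition $(0)$ corresponds to working only with subtraction-free expressions.

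Concretely, I would fix an arbitrary tuple $(A_1,\dots,A_n)\in\o{M}^n$ and assume $\o{\mathcal{M}}\models\o{\varphi}(A_1,\dots,A_n)$; the goal is then $\o{\mathcal{M}}\models\o{\psi}(A_1,\dots,A_n)$. Hypothesis $(1)$ furnishes a tuple $(a_1,\dots,a_n)\in M^n$ with $\o{a_i}=A_i$ for every $i$ and $\mathcal{M}\models\varphi(a_1,\dots,a_n)$. Instantiating the universal sentence of hypothesis $(2)$ at $a_1,\dots,a_n$ and applying modus ponens gives $\mathcal{M}\models\psi(a_1,\dots,a_n)$. Since $\psi$ is negation-free by hypothesis $(0)$, Lemma \ref{prop:inherit} applies to $\psi$ and yields $\o{\mathcal{M}}\models\o{\psi}(\o{a_1},\dots,\o{a_n})=\o{\psi}(A_1,\dots,A_n)$. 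As $(A_1,\dots,A_n)$ was arbitrary, this establishes $\o{\mathcal{M}}\models\forall X_1\cdots\forall X_n(\o{\varphi}(X_1,\dots,X_n)\to\o{\psi}(X_1,\dots,X_n))$, as desired.

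The argument has no genuinely hard step once Lemma \ref{prop:inherit} is in hand; the only points demanding care are bookkeeping ones. First, hypothesis $(1)$ must be used in its exact form: it is essential that the lifted tuple $(a_1,\dots,a_n)$ be an actual preimage of $(A_1,\dots,A_n)$, so that the conclusion of Lemma \ref{prop:inherit} lands precisely on $(A_1,\dots,A_n)$ and not merely on some tuple with the same image. Second, the negation-freeness of $\psi$ cannot be dispensed with: Lemma \ref{prop:inherit} fails for formulas containing negations (equivalently, for strict inequalities such as $\neq$), and Example \ref{ex:counter} exhibits exactly this obstruction. The genuine mathematical content of the scheme is therefore entirely concentrated in Lemma \ref{prop:inherit}, whose proof I defer to \S\ref{sec:appA.2}; Proposition \ref{prop:axiom-trop-principle} is the convenient packaging of it for use in \S\ref{sec:3.3}.
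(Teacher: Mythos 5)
Your proof is correct and follows exactly the same route as the paper's: lift the tuple via hypothesis $(1)$, apply the implication in $\mathcal{M}$ via hypothesis $(2)$, and push forward with Lemma \ref{prop:inherit} using the negation-freeness in $(0)$. The additional remarks on why each hypothesis is needed are accurate but not part of the paper's (shorter) argument.
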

\begin{proof}
Assume $\o{\mathcal{M}}\models\o\varphi(A_1,\dots,A_n)$ for some $(A_1,\dots,A_n)\in \o{M}^n$.
From (1), there exists $(a_1,\dots,a_n)\in M^n$ such that  $\overline{a_i}=A_i$ and $\mathcal{M}\models\varphi(a_1,\dots,a_n)$.
From (2), we have $\mathcal{M}\models\psi(a_1,\dots,a_n)$.
As a result, from (0) and Lemma \ref{prop:inherit}, we have $\o{\mathcal{M}}\models \o{\psi}(A_1,\dots,A_n)$.
\end{proof}

\begin{example}\label{example:trop-structure}
The ``naive'' principle of tropicalization (Proposition \ref{principle:trop}) is a special case of Proposition \ref{prop:axiom-trop-principle}.
Let $\mathcal{L}=\{+,\cdot,{}^{-1},1\}$\footnote{One may add the constant symbol ``$0$'' to $\mathcal{L}$, but it is not mandatory. 
Since all $\mathcal{L}$-formulas that we will use in this manuscript do not contain $0$, we can simply omit it.}.
Here $+,\cdot$ are binary function symbols, ${}^{-1}$ is a unary function symbol, and $1$ is a constant symbol.
Define the two $\mathcal{L}$-structures $\mathcal{M}=(M,+,\cdot,{}^{-1},1)$, $\o{\mathcal{M}}=(\o{M},\o{+},\o{\cdot},\o{{}^{-1}},\o{1})$ as follows:
\begin{itemize}
\item $M$ is the set of germs at $\epsilon=0$ of continuous positive functions $f(\epsilon)$ $(\epsilon>0)$ that satisfy $-\lim\limits_{\epsilon\downarrow 0}\epsilon\log f(\epsilon)\in \RR$.
\item $+$ denotes the standard addition, $\cdot$ denotes that standard multiplication, and ${}^{-1}$ denotes the multiplicative inverse.
$1=1(\epsilon)$ is the constant function.
\item $\o{M}=\RR$.
\item $\o{+}=\min$, $\o{\cdot}=+$, $\o{{}^{-1}}=-$, $\o{1}=0$.
\item The map $M\to\o{M}$ is defined by $f(\epsilon)\mapsto -\lim\limits_{\epsilon\downarrow 0}\epsilon\log f(\epsilon)$.
(This map is usually called ``{\it ultradiscretization}''.)
\end{itemize}
``Subtraction-free polynomials'' is now simply rephrased as ``$\mathcal{L}$-terms.''
Note that there are other possible choices of $\mathcal{M}$.
For example, one can take $M$ as the set of real formal power series the lowest coefficient of which is positive, and $M\to \o{M}$ as the valuation map.
\end{example}

There exists a useful sufficient condition for (1) in Proposition \ref{prop:axiom-trop-principle}.
Assume (i) $M\to \o{M}$ is onto and (ii) $\varphi(x_1,\dots,x_n)$ is ``a collection of definitions of next variables,'' namely, there exist some $1\leq \alpha\leq n$ and $\mathcal{L}$-terms $f_i(x_1,\dots,x_{i-1})$ $(i=\alpha+1,\alpha+2,\dots,n)$ such that
\begin{gather}
\varphi(x_1,\dots,x_n)=
\left(
\bigwedge_{i=\alpha+1}^n\{x_{i}=f_i(x_1,\dots,x_{i-1})\}
\right),\label{eq:recursive}
\end{gather}
where $x_1,\dots,x_{i-1}$ are free variables of $f_i(x_1,\dots,x_{i-1})$.
Under the assumptions (i--ii), one can find an element $(a_1,\dots,a_n)\in M^n$ in (1) of Proposition~\ref{prop:axiom-trop-principle}. 
In fact, from (i), there exists an element $(a_1,\dots,a_\alpha)\in M^\alpha$ that satisfies $\o{a_1}=A_1,\dots,\o{a_\alpha}=A_\alpha$.
Putting $a_i:=f_i(a_1,\dots,a_{i-1})$ for $i=\alpha+1,\alpha+2,\dots,n$, we obtain the $n$-tuple $(a_1,\dots,a_n)$, which satisfies $\mathcal{M}\models\varphi(a_1,\dots,a_n)$.
Since $\o{\mathcal{M}}\models \o{\varphi}(A_1,\dots,A_n)$, the equation $\o{a_i}=\o{f_i(a_1,\dots,a_{i-1})}=A_i$ holds for $i>\alpha$.

\begin{defi}
We call an $\mathcal{L}$-formula $\varphi(x_1,\dots,x_n)$ of the form $(\ref{eq:recursive})$ {\it recursive}.
\end{defi}

\begin{rem}
The condition $($ii$)$ can be significantly generalized as
\[
\varphi(x_{j_1},\dots,x_{j_l})=\exists x_{i_1}\exists x_{i_2}\dots \exists x_{i_k}\tilde{\varphi}(x_1,\dots,x_n),\qquad
\tilde{\varphi}(x_1,\dots,x_n): \mbox{ recursive},
\]
where $1\leq i_1<i_2<\dots<i_k\leq n$ and $\{i_1,\dots,i_k\}\sqcup \{j_1,\dots,j_l\}=\{1,\dots,n\}$.
In fact, for any $(A_{j_1},\dots,A_{j_l})\in \o{M}^l$ with $\overline{\mathcal{M}}\models\o{\varphi}(A_{j_1},\dots,A_{j_l})$, there exists an $n$-tuple $(A_1,\dots,A_n)\in \o{M}^n$ that satisfies $\overline{\mathcal{M}}\models\o{\tilde{\varphi}}(A_1,\dots,A_n)$.
In a similar way to the quantifier-free case, one can find the desired $(a_1,\dots,a_n)\in M^n$.
\end{rem}

\begin{example}
Example \ref{example:good-example} follows from Proposition \ref{prop:axiom-trop-principle} if we put
\[
\begin{gathered}
\varphi(a,\dots,f)=((a=b+c)\wedge(d=e+g)\wedge(f=bg)),\\
\psi(a,\dots,f)=(ad+ce=ae+cd+f).
\end{gathered}
\]
\end{example}
\begin{example}
Consider the proposition ``for any $a,b>0$, the inequality $a^2-4b>0$ implies the existence of $x>0$ that satisfies $x^2-ax+b=0$.''
This proposition can be rewritten in terms of the language $\mathcal{L}$ as
\[
\mathcal{M}\models \forall a\forall b(\varphi(a,b)\to \psi(a,b)),
\]
where
$\varphi(a,b)=\exists y(a^2=4b+y)$ and $\psi(a,b)=\exists x(x^2+b=ax)$.
By Proposition \ref{prop:axiom-trop-principle}, we have
\[
\o{\mathcal{M}}\models \forall A\forall B
(
\exists Y(2A=\min[B,Y])\to \exists X(\min[2X,B]=A+X)
).
\]
\end{example}

\subsection{Proof of (\ref{eq:evolution}) by formal arguments}\label{sec:3.3}

We now give a proof of (\ref{eq:evolution}) by formal arguments that we have seen in the previous section.
Hereafter, we fix the language $\mathcal{L}$, and the $\mathcal{L}$-structures $\mathcal{M}$ and  $\o{\mathcal{M}}$ as Example \ref{example:trop-structure}.
Let 
\[
\varphi(f_{i,j}^t,f_{i,j+1}^{t+1},f_{i+1,j+1}^t,f_{i-1,j}^{t+1},f_{i,j+1}^t,f_{i,j}^{t+1})
\]
be the $\mathcal{L}$-formula
\[
f^t_{i,j}=(f^{t+1}_{i,j+1})^{-1}\cdot (f^t_{i+1,j+1}f^{t+1}_{i-1,j}+f^t_{i,j+1}f^{t+1}_{i,j}),
\]
which is equivalent to the discrete KP equation (\ref{eq:discreteKP}).
We write it as $\varphi_{i,j}^t$ in short.
We also write the first equation of (\ref{eq:transform}) as 
\[
\theta(I_{i,j}^t,f_{i-1,j}^t,f_{i,j}^{t+1},f_{i,j}^{t},f_{i-1,j}^{t+1})\qquad (\theta_{i,j}^t, \mbox{ in short}),
\] 
and the second equation as 
\[
\chi(V_{i,j}^t,f_{i-1,j}^{t},f_{i+1,j+1}^{t},f_{i,j}^{t},f_{i,j+1}^{t})\qquad (\chi_{i,j}^t,\mbox{ in short}).
\]
Further, let 
\[
\Phi_{i,j}^t=\Phi(I_{i,j}^t,I_{i+1,j}^{t},I_{i,j}^{t+1},V_{i,j}^{t},V_{i+1,j}^{t},V_{i,j}^{t+1})
\] 
denote (\ref{eq:subtractionfree}).

The proposition ``\{(\ref{eq:discreteKP}) and (\ref{eq:transform})\} implies (\ref{eq:subtractionfree}),'' which can be checked by straightforward algebraic calculations, is now rephrased as
\[
\begin{aligned}
\mathcal{M}\models
&\forall I_{i,j}^t, 
\forall I_{i+1,j}^t, 
\forall I_{i+1,j+1}^t, 
\forall V_{i,j}^t,
\forall V_{i,j}^{t+1}, 
\forall V_{i+1,j}^t,
\forall f_{i-1,j}^t, 
\forall f_{i,j}^t, 
\forall f_{i+1,j}^t,\\
&\forall f_{i,j+1}^t, 
\forall f_{i+1,j+1}^t, 
\forall f_{i+2,j+1}^t, 
\forall f_{i-1,j}^{t+1}, 
\forall f_{i,j}^{t+1}, 
\forall f_{i+1,j}^{t+1}, 
\forall f_{i,j+1}^{t+1}, 
\forall f_{i+1,j+1}^{t+1}
\\
&\quad
(\varphi_{i+1,j}^t\wedge \varphi_{i,j}^t\wedge
\theta_{i,j}^t\wedge\theta_{i+1,j}^t\wedge\theta_{i+1,j+1}^t\wedge
\chi_{i,j}^{t}\wedge\chi_{i,j}^{t+1}\wedge\chi_{i+1,j}^{t}
)\to \Phi_{i,j}^t.
\end{aligned}
\]
By ordering the variables properly, we find the assumption of the implication is recursive.
In fact, it is enough to order them as (any $I_{\ast,\ast}^\ast,V_{\ast,\ast}^\ast$)$>f_{i+1,j}^t>f_{i,j}^t>$(any other $f_{\ast,\ast}^\ast$).
Therefore, from Proposition \ref{prop:axiom-trop-principle}, its tropicalization is also true.
Then, the statement ``\{(\ref{eq:udKP}) and (\ref{eq:defofQW})\} implies (\ref{eq:evolution})'' is true over $\o{\mathcal{M}}$.
%
%

\section{Combinatorial interpretation of (\ref{eq:evolution})}\label{sec:4}

We give a combinatorial interpretation of (\ref{eq:evolution}), which will help us to understand the relationship between (\ref{eq:evolution}) and the jeu de taquin slides.

\subsection{Matrix $W$}\label{sec:4.1}

Let $S$ be a skew tableau and $F_{i,j}$ denote the number of $0,1,\dots,j$'s in the top $i$ 
rows of $S$.
Put
\begin{equation}\label{eq:def_of_W}
\begin{aligned}
&W_{i,j}
:=F_{i,j}+F_{i,j+1}-F_{i-1,j}-F_{i+1,j+1}\\
&=\sharp \{0,1,\dots,j\mbox{'s in the } i^{\mathrm{th}} \mbox{ row}\}-\sharp\{0,1,\dots,(j+1)\mbox{'s in the } (i+1)^{\mathrm{th}} \mbox{ row}\}.
\end{aligned}
\end{equation}
By definition of skew tableaux, we find $W_{i,j}$ must be nonnegative and the sum $\sum_{p\geq 0}W_{i+p,j+p}$ satisfies
\[
\textstyle\sum_{p\geq 0}W_{i+p,j+p}=\sharp \{0,1,\dots,j\mbox{'s in the $i^{\mathrm{th}}$ row}\}.
\]

A skew tableau $S$ of shape $\lambda/\mu$ can be identified with the increasing sequence
\[
\mu=\lambda^{(0)}\subset \lambda^{(1)}\subset \lambda^{(2)}\subset\dots\subset \lambda^{(N)}=\lambda,
\]
where $\lambda^{(j)}$ is the sub-diagram of $S$ in which one of $0,1,2,\dots,j$ is filled\footnote{An empty box is regarded as a box with $0$.}.
Each skew diagram $\lambda^{(j+1)}/\lambda^{(j)}$ does not contain no two boxes in each column.
Obviously, we have
\[
\lambda_i^{(j)}=\textstyle\sum_{p\geq 0}W_{i+p,j+p},\qquad W_{i,j}=\lambda_i^{(j)}-\lambda_{i+1}^{(j+1)}.
\]

The $W_{i,j}$ satisfies the following conditions:
\begin{gather}
\mbox{
There exists some $J$ such that \ 
$j>J\Rightarrow W_{i,j}=W_{i,j+1}$ for all $i$}.\label{eq:cond1}\\
\mbox{There exists some $I$ such that\
 $i>I\Rightarrow W_{i,j}=0$ for all $j$}.\label{eq:cond1.5}\\
\textstyle \sum_{p\geq 0}W_{i+p,j+p}\geq \sum_{p\geq 0}W_{i+1+p,j+p}
\qquad (\Leftrightarrow \lambda_i^{(j)}\geq \lambda_{i+1}^{(j)}).\label{eq:cond2}
\end{gather}
Set
\begin{equation*}
\begin{gathered}
\Omega:= (\mbox{the set of skew tableaux}),\\
\mathfrak{X}:=\{(W_{i,j})_{
\kasane{i\geq 1}{j\geq 0 \hfill}
}\,\vert\,W_{i,j}\in \ZZ_{\geq 0}\ \mbox{that satisfies (\ref{eq:cond1}), (\ref{eq:cond1.5}), (\ref{eq:cond2})}
\}.
\end{gathered}
\end{equation*}
Then there exists the map $W:\Omega\to \mathfrak{X}$ that assigns a skew tableau $S$ with the matrix $(W_{i,j})$ defined by (\ref{eq:def_of_W}).
\begin{prop}\label{prop:bijection}
$W$ is bijective.
\end{prop}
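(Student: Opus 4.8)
The plan is to construct an explicit two-sided inverse $W^{-1}\colon \mathfrak{X}\to\Omega$, thereby establishing bijectivity directly. Given a matrix $(W_{i,j})\in\mathfrak{X}$, I set $\lambda^{(j)}_i:=\sum_{p\ge 0}W_{i+p,j+p}$, which is a finite sum by condition (\ref{eq:cond1.5}). First I would check that each $\lambda^{(j)}=(\lambda^{(j)}_1\ge\lambda^{(j)}_2\ge\cdots)$ is a genuine Young diagram: the weak decrease in $i$ is exactly condition (\ref{eq:cond2}). Next I would verify the chain $\lambda^{(0)}\subset\lambda^{(1)}\subset\lambda^{(2)}\subset\cdots$ by showing $\lambda^{(j)}_i\le\lambda^{(j+1)}_i$; comparing the two sums termwise, $\lambda^{(j+1)}_i-\lambda^{(j)}_i=\sum_{p\ge 0}(W_{i+p,j+1+p}-W_{i+p,j+p})$, and each summand is $W_{i+p,j+1+p}-W_{i+p,j+p}\ge -W_{i+1+p,j+1+p}$... more carefully, one uses $W_{i,j}\ge 0$ together with the telescoping identity $\lambda^{(j+1)}_i-\lambda^{(j)}_i = W_{i,j+1}+(\lambda^{(j+1)}_{i+1}-\lambda^{(j)}_{i+1})$ wait — rather, from $W_{i,j}=\lambda^{(j)}_i-\lambda^{(j+1)}_{i+1}$ one gets $\lambda^{(j+1)}_{i+1}=\lambda^{(j)}_i-W_{i,j}\le\lambda^{(j)}_i\le\lambda^{(j)}_{i}$, and combined with $\lambda^{(j+1)}_i\ge\lambda^{(j+1)}_{i+1}$ plus an induction downward from large $i$ (where both sides vanish) one concludes $\lambda^{(j)}_i\le\lambda^{(j+1)}_i$. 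I would also note the stabilization in $j$: by (\ref{eq:cond1}), $\lambda^{(j)}=\lambda^{(j+1)}$ for $j>J$, so the chain terminates and defines a skew diagram $\lambda/\mu$ with $\lambda=\lambda^{(J+1)}$, $\mu=\lambda^{(0)}$.

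The second ingredient is that the chain $\mu=\lambda^{(0)}\subset\lambda^{(1)}\subset\cdots\subset\lambda=\lambda^{(N)}$ actually encodes a valid \emph{semi-standard} skew tableau, i.e. that filling each box of $\lambda^{(j)}/\lambda^{(j-1)}$ with the entry $j$ produces something with weakly increasing rows and strictly increasing columns. Weakly increasing rows is automatic from $\lambda^{(j-1)}\subset\lambda^{(j)}$. The strict increase down columns is equivalent to the statement that no column of $\lambda^{(j)}/\lambda^{(j-1)}$ contains two boxes, i.e. $\lambda^{(j)}_i-\lambda^{(j-1)}_i\le 1$ whenever $\lambda^{(j-1)}_{i-1}\ge\lambda^{(j)}_i$ (the horizontal-strip condition). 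This is where I expect the main work to lie: one must extract this from nonnegativity of the $W$'s. Using $W_{i,j}=\lambda^{(j)}_i-\lambda^{(j+1)}_{i+1}\ge 0$, i.e. $\lambda^{(j+1)}_{i+1}\le\lambda^{(j)}_i$, we get precisely that row $i+1$ in diagram $\lambda^{(j+1)}$ does not extend past row $i$ in diagram $\lambda^{(j)}$, which is exactly the horizontal-strip condition for the skew shape $\lambda^{(j+1)}/\lambda^{(j)}$ — this shows $\lambda^{(j+1)}/\lambda^{(j)}$ is a horizontal strip, hence the filling is column-strict. Conversely, a skew tableau $S$ of shape $\lambda/\mu$ gives such a chain (with $\lambda^{(j)}$ the sub-shape filled by entries $\le j$, empty boxes counted as $0$), and the formulas $\lambda^{(j)}_i=\sum_{p\ge 0}W_{i+p,j+p}$, $W_{i,j}=\lambda^{(j)}_i-\lambda^{(j+1)}_{i+1}$ are stated in the excerpt just before the proposition.

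Finally I would assemble these into the two inverse maps. Define $\Psi\colon\mathfrak{X}\to\Omega$ by sending $(W_{i,j})$ to the skew tableau obtained from the chain $(\lambda^{(j)})_j$ just constructed. The identities recalled before the proposition give $W\circ\Psi=\mathrm{id}_{\mathfrak X}$ (recovering $W_{i,j}=\lambda^{(j)}_i-\lambda^{(j+1)}_{i+1}$ from the shapes) and $\Psi\circ W=\mathrm{id}_\Omega$ (a skew tableau is determined by its chain of sub-shapes $\lambda^{(j)}$, and those are recovered as partial sums of the $W$'s). Hence $W$ is a bijection. The only genuinely delicate point, as noted, is checking that membership in $\mathfrak X$ — in particular the nonnegativity $W_{i,j}\ge 0$ — is exactly equivalent to the column-strictness of the resulting filling; everything else is bookkeeping with the finite telescoping sums, whose convergence is guaranteed by (\ref{eq:cond1}) and (\ref{eq:cond1.5}).
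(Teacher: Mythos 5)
Your overall route is the same as the paper's: the inverse is the telescoping diagonal sum $\lambda^{(j)}_i=\sum_{p\ge 0}W_{i+p,j+p}$, and the content of the proof is checking that a matrix in $\mathfrak{X}$ pulls back to a genuine skew tableau. Your verifications of finiteness of the sums (from (\ref{eq:cond1.5})), of $\lambda^{(j)}_i\ge\lambda^{(j)}_{i+1}$ (which is literally (\ref{eq:cond2})), of stabilization in $j$ (from (\ref{eq:cond1})), and of the horizontal-strip condition $\lambda^{(j+1)}_{i+1}\le\lambda^{(j)}_i$ (which is literally $W_{i,j}\ge 0$) are all correct, as is the mutual-inverse bookkeeping at the end.

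The gap is the step $\lambda^{(j)}_i\le\lambda^{(j+1)}_i$. The argument you sketch there does not close: from the inductive hypothesis $\lambda^{(j)}_{i+1}\le\lambda^{(j+1)}_{i+1}$ together with $\lambda^{(j+1)}_{i+1}\le\lambda^{(j)}_i$ and $\lambda^{(j+1)}_{i+1}\le\lambda^{(j+1)}_i$ you obtain only lower bounds on $\lambda^{(j)}_i$ and on $\lambda^{(j+1)}_i$ separately, never a comparison between them. Worse, the containment cannot be derived from nonnegativity and (\ref{eq:cond1})--(\ref{eq:cond2}) at all: take $W_{1,0}=1$ and every other entry $0$. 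This matrix satisfies all the listed conditions (with $J=0$ and $I=1$), yet $\lambda^{(0)}_1=1$ while $\lambda^{(1)}_1=0$, so the associated chain is not increasing and does not come from any skew tableau. The containment $\lambda^{(j)}\subset\lambda^{(j+1)}$ is an independent constraint, equivalent to $\sum_{p\ge 0}W_{i+p,j+p}\le\sum_{p\ge 0}W_{i+p,j+1+p}$ for all $i,j$, and it must either be adjoined to the definition of $\mathfrak{X}$ or obtained from a hypothesis not currently present. (The paper's own proof disposes of exactly this point with the unproved sentence that $\Omega$ contains the inverse image of $\mathfrak{X}$; your more explicit attempt has in fact located where the real content lies.) Once that one condition is supplied, the rest of your write-up goes through.
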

\begin{proof}
By the correspondence $\lambda_i^{(j)}\mapsto \Delta_{i,j}$, $\Omega$ can be seen as a subset of
\[
\tilde{\Omega}:= \{(\Delta_{i,j})_{\kasane{i\geq 1}{j\geq 0}}\,\vert\, 
\Delta_{i,j}\in \ZZ_{\geq 0},\ \Delta_{i,j}\geq \Delta_{i+1,j+1}\geq \Delta_{i+2,j+2}\geq \dots \to 0,\ (\forall i,j)\}.
\]
We also regard $\mathfrak{X}$ as a subset of
\[
\tilde{\mathfrak{X}}:=\{(W_{i,j})_{
\kasane{i\geq 1}{j\geq 0 \hfill}
}\,\vert\,W_{i,j}\in \ZZ_{\geq 0},\ \mbox{the sum }\textstyle\sum_{p\geq 0}W_{i+p,j+p} \mbox{ converges for each $i,j$} \}.
\]
Then the map
\[
\tilde{\Omega}\to \tilde{\mathfrak{X}};\qquad (\Delta_{i,j})_{i,j}\mapsto (\Delta_{i,j}-\Delta_{i+1,j+1})_{i,j}
\]
is obviously bijective
(the inverse is $(W_{i,j})_{i,j}\mapsto (\sum_{p\geq 0} W_{i+p,j+p})_{i,j}$).
Its restriction to $\Omega$ coincides with $W$.
Because $\Omega$ contains the inverse image of $\mathfrak{X}$, $W$ must be bijective.
\end{proof}

\subsection{Jeu de taquin $\varphi_k$ }\label{sec:4.2}

From the statement in the previous paragraph, we always identify $\Omega\leftrightarrow \mathfrak{X}$.
We will construct a map $\varphi_k:\mathfrak{X}\to \mathfrak{X}$ for any positive integer $k$ that is a tropical counterpart of the jeu de taquin starting from $k^\mathrm{th}$ row.

For any $W=(W_{i,j})\in \mathfrak{X}$, we define $W^+:=\varphi_k(W)$, the image of $W$ by $\varphi_k$, by the following manner:
\begin{enumerate}
\def\labelenumi{(\theenumi).}
\item Set $\bm{Q}_0=(Q_{1,0},Q_{2,0},\dots):=(0,\dots,0,
\stackrel{\stackrel{k}{\vee}}{1},
0,\dots)$.
\item\label{mini:3} 
When the vector $\bm{Q}_j=(Q_{1,j},Q_{2,j},\dots)$ is already defined for $j\in \ZZ_{\geq 0}$, define $\bm{Q}_{j+1}=(Q_{1,j+1},Q_{2,j+1},\dots)$ and $\bm{W}^+_j=(W^+_{1,j},W^+_{2,j},\dots)$ by the formula
\begin{equation}\label{eq:R-matrix}
\begin{cases}
Q_{i+1,j+1}:=(\min[Q_{i+1,j},W_{i+1,j}]-\min[Q_{i,j},W_{i,j}])+Q_{i,j},\\
W_{i,j}^+:=(\min[Q_{i+1,j},W_{i+1,j}]-\min[Q_{i,j},W_{i,j}])+W_{i,j},
\end{cases}
\end{equation}
where $Q_{0,j}=0$, $W_{0,j}=+\infty$.
(Compare with (\ref{eq:evolution})).
\item Repeat (2) to obtain $Q_{i,j}$ and $W^+_{i,j}$ for all $i,j$.
\end{enumerate}

Equation (\ref{eq:R-matrix}) can be seen as a kind of recurrence formula, the inputs of which are $\bm{Q}_j$ and $\bm{W}_j$, and the outputs are $\bm{Q}_{j+1}$ and $\bm{W}^+_j$.
To understand this situation, it is convenient to draw the diagram 
\[
\sayou{ & \bm{W}_j & \\
\bm{Q}_j & \+ & \bm{Q}_{j+1}\\
 & \bm{W}^+_j
},
\]
where the inputs are written on the northwest side and the outputs are written on the southeast side.
Then the procedure that is presented above can be symbolically displayed as
\begin{equation}\label{eq:diagram-yoko}
\sayou{
 & \bm{W}_0 & & \bm{W}_1 & & \bm{W}_2 & & \bm{W}_3 & \\
\bm{Q}_0 & \+ & \bm{Q}_1 & \+ & \bm{Q}_2 & \+ & \bm{Q}_3 & \+ &\cdots\\
 & \bm{W}^+_0 & & \bm{W}^+_1 & & \bm{W}^+_2 & & \bm{W}^+_3 & \\
}.
\end{equation}

The map $\varphi_k$ also admits a diagrammatic interpretation as follows:
\begin{itemize}
\item Write a matrix $W=(W_{i,j})$ down as Fig \ref{fig:example1}.
\item Draw a path on the matrix by the following rule: 
\begin{itemize}
\item The path starts from the $(k,0)^{\mathrm{th}}$ position.
\item When the path reaches at the $(i,j)^{\mathrm{th}}$ position, it extends to the lower right neighbor if $W_{i,j}=0$, or to the right neighbor if $W_{i,j}\neq 0$.
\end{itemize}
\item Decrease all non-zero numbers on the path by one, and increase all the numbers at the upper neighbor of the decreased numbers by one.
The matrix given by this procedure coincides with $\varphi_k(W)$.
\item The matrix $Q=(Q_{i,j})_{i,j}$ is given by putting $Q_{i,j}=1$ if the path goes through the $(i,j)^\mathrm{th}$ position, and $Q_{i,j}=0$ otherwise.
\end{itemize}

\begin{figure}[htbp]
\centering
$W^0=
\left(
\haiti{
1\r & 1\r & 0 \d& 0 & 0 & 0 \\
1 & 1 & 2 & 2 \r& 1\r & 1 \\
0 & 0 & 0 & 0 & 1 & 2 \\
0 & 0 & 0 & 0 & 0 & 0 
}
\right)
$,
$W^1=
\left(
\haiti{
0 & 0 & 0 & 1 & 1 & 1 \\
1 \r& 1 \r& 2 \r& 1 \r& 0 \d& 0 \\
0 & 0 & 0 & 0 & 1 & 2 \\
0 & 0 & 0 & 0 & 0 & 0 
}
\right)
$,\\
$W^2=
\left(
\haiti{
1 \r& 1 \r& 1 \r& 2 \r& 1 \r& 1 \\
0 & 0 & 1 & 0 & 0 & 1 \\
0 & 0 & 0 & 0 & 1 & 1 \\
0 & 0 & 0 & 0 & 0 & 0 
}
\right)
$,

$Q^0=
\left(
\haiti{
1 & 1 & 1 & 0 & 0 & 0 \\
0 & 0 & 0 & 1 & 1 & 1 \\
0 & 0 & 0 & 0 & 0 & 0 \\
0 & 0 & 0 & 0 & 0 & 0 
}
\right)
$,
$Q^1=
\left(
\haiti{
0 & 0 & 0 & 0 & 0 & 0 \\
1 & 1 & 1 & 1 & 1 & 0 \\
0 & 0 & 0 & 0 & 0 & 1 \\
0 & 0 & 0 & 0 & 0 & 0 
}
\right)
$,\\
$Q^2=
\left(
\haiti{
1 & 1 & 1 & 1 & 1 & 1 \\
0 & 0 & 0 & 0 & 0 & 0 \\
0 & 0 & 0 & 0 & 0 & 0 \\
0 & 0 & 0 & 0 & 0 & 0 
}
\right)
$

\caption{Time evolution rule of $W^t$ and $Q^t$. 
$W^1=\varphi_1(W^0)$, $W^2=\varphi_2(W^1)$.
The path at time $t=2$ corresponds with $\varphi_1$.
}
\label{fig:example1}
\end{figure}
The $\varphi_k$ coincides with the jeu de taquin slide that starts from the $k^\mathrm{th}$ row. 
In fact, the data $W=(W_{i,j})$, $W^+=(W_{i,j}^+)$, $Q=(Q_{i,j})$ in (\ref{eq:diagram-yoko}) satisfy the relation (\ref{eq:evolution}) under the substitution $W_{i,j}= W_{i,j}^t$, $W_{i,j}^+=W_{i,j}^{t+1}$, $Q_{i,j}= Q_{i,j}^t$.
(Compare (\ref{eq:evolution}) with (\ref{eq:R-matrix}).)
Since 
\[
Q_{i,j}^t=F_{i,j}^t+F_{i-1,j}^{t+1}-F_{i-1,j}^t-F_{i,j}^{t+1}
=(F_{i,j}^t-F_{i-1,j}^t)-(F_{i,j}^{t+1}-F_{i-1,j}^{t+1})
\] 
(see (\ref{eq:defofQW})), the number of $j$'s in the $i^\mathrm{th}$ row decreases by $Q_{i,j}^t$ under the time evolution $t\mapsto t+1$.
This means that substituting
$\bm{Q}_0=(0,\dots,0,
\stackrel{\stackrel{k}{\vee}}{1},
0,\dots)$ is equivalent to removing an empty box from the $k^\mathrm{th}$ row, and is also equivalent to starting the jeu de taquin slide from the $k^\mathrm{th}$ row.

The $Q_{i,j}$ also has a diagrammatic interpretation. 
Consider the jeu de taquin slide $\varphi_k$.
Let $B_j$ be the position of the hole (see \S \ref{sec:appB}) at when all the numbers equal to or less than $j$ have been moved. 
Then, $Q_{i,j}=1$ if $B_j$ is in the $i^\mathrm{th}$ row, and $Q_{i,j}=0$ otherwise.

\subsection{Example}\label{sec:4.3}

The jeu de taquin slide
\[
\Tableau{\bl & \gray & 1 & 2 \\ 1 & 1 & 3 & 5 \\ 3 & 4 & 4 \\ }
\quad
\Tableau{\bl & 1 & 1 & 2 \\ 1 & \gray & 3 & 5 \\ 3 & 4 & 4 \\ }
\quad
\Tableau{\bl & 1 & 1 & 2 \\ 1 & 3 & \gray & 5 \\ 3 & 4 & 4 \\ }
\quad
\Tableau{\bl & 1 & 1 & 2 \\ 1 & 3 & 4 & 5 \\ 3 & 4 & \gray \\ }
\]
corresponds with the matrices
\[
W=
\left(
\haiti{
0 \d& 1 & 1 & 1 & 0 & 0 \\
0 & 2 \r& 1 \r& 0 \d& 0 & 1 \\
0 & 0 & 0 & 1 & 3 \r& 3
}
\right),\quad
Q=
\left(
\haiti{
1 & 0 & 0 & 0 & 0 & 0 \\
0 & 1 & 1 & 1 & 0 & 0 \\
0 & 0 & 0 & 0 & 1 & 1
}
\right).
\]
The matrix $W^+$ is given by
\[
W^+=
\left(
\haiti{
0 & 2 & 2 & 1 & 0 & 0 \\
0 & 1 & 0 & 0 & 1 & 2 \\
0 & 0 & 0 & 1 & 2 & 2
}
\right).
\]

\section{Tropical interpretation of rectification}\label{sec:5}

In the following two sections, we give an alternative proof of the uniqueness of a rectification~\cite[\S 1--\S 3]{fulton_1996}.
First, we introduce the definition of the rectification and its tropical interpretation (\S \ref{sec:5.1}).
Then, we construct a new useful combinatorial map that is equivalent to the rectification.
The construction is based on the theory of Noumi-Yamada's geometric tableaux (\S \ref{sec:5.2}).
Finally, we show a diagrammatic realization (\S \ref{sec:5.4}) and a commutation relation of the map (\S \ref{sec:5.5}).
We will see that the commutative diagram in \S \ref{sec:5.5} plays an important role in proving the uniqueness of a rectification. 

\subsection{Rectification}\label{sec:5.1}

Any skew tableau of shape $\lambda/\mu$ is led to a (non-skew) tableau by applying a finite sequence of jeu de taquin slides.
Repeating jeu de taquin slides is nothing but choosing inside corners repeatedly.
By putting numbers in such inside corners in decreasing order, one obtains a standard tableau of shape $\mu$.
For example, if we apply the sequence of jeu de taquin slides to the tableau 
\[
\Tableau{
\bl & \bl & \bl & 1 & 2\\
\bl & 1 & 2 & 3\\
1 & 2
}\qquad \mbox{defined by}\qquad
\Tableau{
\lgray 1 & \lgray 2 & \lgray 3\\
\lgray 4
},
\]
we obtain the sequence
\[
\Tableau{
\bl & \bl & \bl & 1 & 2\\
\gray & 1 & 2 & 3\\
1& 2
}\ \to\ 
\Tableau{
\bl & \bl & \gray & 1 & 2\\
1 & 1 & 2 & 3\\
2
}\ \to\ 
\Tableau{
\bl &  \gray & 1 & 2\\
1 & 1 & 2 & 3\\
2
}\ \to\ 
\Tableau{
\gray &  1 & 1 & 2\\
1 & 2 & 3 \\
2
}\ \to\ 
\Tableau{
1 & 1 & 1 & 2\\
2 & 2 & 3  
}.
\]
While, another standard tableau
\[
\Tableau{
\lgray 1 & \lgray 3& \lgray 4\\
\lgray 2
}
\]
gives the sequence
\[
\Tableau{
\bl & \bl & \gray & 1 & 2\\
\bl & 1 & 2 & 3\\
1& 2
}\ \to\ 
\Tableau{
\bl & \gray & 1 & 2\\
\bl & 1 & 2 & 3\\
1& 2
}\ \to\ 
\Tableau{
\bl & 1 & 1 & 2\\
\gray & 2 & 2 & 3\\
1
}\ \to\ 
\Tableau{
\gray & 1 & 1 & 2\\
1 & 2 & 2 & 3
}\ \to\ 
\Tableau{
1 & 1 & 1 & 2\\
2 & 2 & 3
}.
\]
It is not a coincidence that the two tableaux at the rightmost are same.
In fact, it is known that any choice of standard tableau leads a unique tableau \cite[\S 1, Claim 2]{fulton_1996}.
\begin{defi}
Let $S$ be a skew tableau.
A rectification of $S$ is a tableau that is obtained by applying a finite sequence of jeu de taquin slides to $S$.
\end{defi}

With diagrammatic expressions as in \S \ref{sec:4.2}, the rectification can be displayed as
\begin{equation}\label{eq:diagram-of-rectification}
\sayou{
 & \bm{W}_0& &\bm{W}_1& &\bm{W}_2& \cdots \\
\bm{Q}_0^{(1)} & \+ & \bm{Q}_1^{(1)}& \+ & \bm{Q}_2^{(1)} & \+ &\cdots\\
 & \bm{W}^{(1)}_0& &\bm{W}^{(1)}_1& &\bm{W}^{(1)}_2& \cdots \\
 \bm{Q}_0^{(2)} & \+ & \bm{Q}_1^{(2)}& \+ & \bm{Q}_2^{(2)} & \+ &\cdots\\
  & \bm{W}^{(2)}_0& &\bm{W}^{(2)}_1& &\bm{W}^{(2)}_2& \cdots \\
  & \vdots& &\vdots& &\vdots& \\
\bm{Q}_0^{(\ell)} & \+ & \bm{Q}_1^{(\ell)}& \+ & \bm{Q}_2^{(\ell)} & \+ &\cdots\\
 & \bm{W}^{(\ell)}_0& &\bm{W}^{(\ell)}_1& &\bm{W}^{(\ell)}_2& \cdots \\
}.
\end{equation}
Each vector $\bm{Q}_0^{(\ell-i)}$ corresponds with the jeu de taquin slide starting at $\Tableau{\lgray i}$．
The vectors $\bm{W}^{(\ell)}_0,\bm{W}^{(\ell)}_1,\bm{W}^{(\ell)}_2,\dots$ at the bottom row correspond with the rectification.

\subsection{Noumi-Yamada's geometric tableau}\label{sec:5.2}

In \cite{noumi2004}, Noumi and Yamada introduced an interesting characterization of the {\it row bumping}~\cite{fulton_1996} in terms of tropical integrable systems.

For real vectors $I=(I_1,I_2,\dots)$ and $V=(V_1,V_2,\dots)$, we define the matrices $E(I)$, $F(V)$ of infinite size as 
\[
E(I)=\left(
\begin{array}{cccc}
I_1 & 1 &  &  \\ 
 & I_2 & 1 &  \\ 
 &  & I_3 & \ddots \\ 
 &  &  & \ddots
\end{array} 
\right),\quad
F(V)=\left(
\begin{array}{cccc}
1 &  &  &  \\ 
-V_1 & 1 &  &  \\ 
 & -V_2 & 1 &  \\ 
 &  & \ddots & \ddots
\end{array} 
\right).
\]
Moreover, for a vector $I'=(1,\dots,1,I_{k},I_{k+1},\dots)$ whose first ($k-1$) entries are $1$, we put
\[
E_k(I')=\left(
\begin{array}{cc}
\mathrm{Id}_{k-1} &  \\ 
 & E(I'')
\end{array} 
\right),\quad I''=(I_k,I_{k+1},\dots).
\]

Let us consider the equation
\begin{equation}\label{eq:mat-equation}
E(I_\ell)\cdots E(I_2) E(I_1)=E_1(J_1)E_2(J_2)\dots E_\ell(J_\ell),
\end{equation}
where $I_k=(I_{1,k},I_{2,k},\dots)$ and $J_k=(1,\dots,1,J_{k,k},J_{k+1,k},\dots)$ are real vectors for $k=1,2,\dots,\ell$.
It is proved by using Gaussian elimination method that (\ref{eq:mat-equation}) determines some rational map that corresponds $\{I_k\}_k$ to $\{J_k\}_k$.

The following theorem is given by Noumi-Yamada~\cite[\S 2]{noumi2004}:
\begin{thm}[Geometric tableau~\cite{noumi2004}]\label{thm:noumi-yamada}
Equation $(\ref{eq:mat-equation})$ possesses the following properties:
\begin{enumerate}
\def\labelenumi{$(\roman{enumi})$}
\item 
The correspondence $\{I_{i,j}\}\mapsto \{J_{i,j}\}$ is a subtraction-free rational map, that is, every $J_{i,j}$ is expressed as a subtraction-free rational function of $\{I_{i,j}\}$.
This implies the existence of the tropicalization of the map. 
\item Let $Q_{i,j}=\o{I_{i,j}}$ and $P_{i,j}=\o{J_{i,j}}$ be tropical variables.
Then the ``tropicalized'' map $\{Q_{i,j}\}\mapsto \{P_{i,j}\}$ has the following combinatorial interpretation:
Let $\bm{Q}_j=(Q_{1,j},Q_{2,j},\dots)$ be the vector whose $\alpha_j^{\mathrm{th}}$ entry is $1$ and the others are $0$.
Then $P_{i,j}$ 
equals to the number of $j$'s in the $i^\mathrm{th}$ row of the tableau
\[
\Tableau{\alpha_1}\la \Tableau{\alpha_2}\la\cdots \la\Tableau{\alpha_\ell}.
\]
\end{enumerate}
\end{thm}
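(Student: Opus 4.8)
The plan is to prove Theorem \ref{thm:noumi-yamada} by reducing both assertions to a careful analysis of the matrix equation (\ref{eq:mat-equation}) via Gaussian elimination, and then tropicalizing. First I would establish (i). Multiplying out the left-hand side $E(I_\ell)\cdots E(I_1)$ gives a banded upper-triangular matrix whose entries are manifestly polynomials in the $I_{i,k}$ with nonnegative integer coefficients. The right-hand side $E_1(J_1)\cdots E_\ell(J_\ell)$ must be obtained from it by an $LU$-type factorization that, at each step, ``sweeps out'' the superdiagonal contributions coming from the blocks of smaller index. The key point is that each such elimination step only involves divisions by already-computed $J$-entries and additions of products of $I$-entries and $J$-entries, so inductively every $J_{i,j}$ is a subtraction-free rational function of the $I_{i,k}$. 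Making this precise requires choosing the right order in which to read off the $J_{i,j}$ (roughly, by diagonals, as in the recursive ordering used in \S\ref{sec:3.3}) and checking that the recursion never forces a subtraction; this is the kind of bookkeeping that Noumi-Yamada carry out, and I would follow their argument. Once (i) holds, the tropicalization of the map $\{Q_{i,j}\}\mapsto\{P_{i,j}\}$ exists by Proposition \ref{principle:trop}.

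For (ii), the strategy is to compare the tropical dynamics defined by (\ref{eq:mat-equation}) with the combinatorics of row insertion $\Tableau{\alpha_1}\la\cdots\la\Tableau{\alpha_\ell}$. I would proceed by induction on $\ell$. The base case $\ell=1$ is immediate: $E(I_1)=E_1(J_1)$ forces $J_{i,1}=I_{i,1}$, so $P_{i,1}=Q_{i,1}$, and on the combinatorial side the tableau $\Tableau{\alpha_1}$ is a single column with one box in row $\alpha_1$, whose number of $1$'s in row $i$ is exactly the $i^{\mathrm{th}}$ entry of $\bm{Q}_1$. For the inductive step, one interprets the single-row insertion $T\la \Tableau{\alpha_\ell}$ — equivalently, the passage from $E(I_\ell)\cdots E(I_1)$ on the left to $E(I_{\ell-1})\cdots E(I_1)$ with an extra factor — as one application of an $R$-matrix move of the type (\ref{eq:R-matrix}). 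Concretely, the factorization peeling off $E_\ell(J_\ell)$ from the product is governed by the same min-plus recursion that governs the path-description of $\varphi_k$ in \S\ref{sec:4.2}, with the ``$1$'' in $\bm{Q}_0$ located at position $\alpha_\ell$; and the effect of that move on the $P$-entries is exactly the incremental effect of inserting one letter into a semistandard tableau (the bumping path). Thus the tropical matrix identity encodes, row by row, Schensted's bumping algorithm.

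Concretely the proof decomposes into three lemmas: (a) the LU-factorization (\ref{eq:mat-equation}) is well-defined and subtraction-free — this is (i); (b) a single elementary move $E(I_k)\mapsto$ (absorb into the $J$-factors) tropicalizes to one instance of the recursion (\ref{eq:R-matrix}) / (\ref{eq:evolution}), hence to one jeu-de-taquin-type slide; (c) a single such slide, when applied on the ``insertion'' side, reproduces one step of row insertion, so that iterating $\ell$ times builds the tableau $\Tableau{\alpha_1}\la\cdots\la\Tableau{\alpha_\ell}$ and its row-content vectors are the $\bm{P}_j=(P_{1,j},P_{2,j},\dots)$. I would prove (b) by direct comparison of the two min-plus recursions — it is essentially an identification of variables — and (c) by matching the path in the diagrammatic description of \S\ref{sec:4.2} with the bumping route of Schensted insertion.

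The main obstacle, I expect, is (a): verifying that the Gaussian elimination producing the right-hand side of (\ref{eq:mat-equation}) genuinely stays subtraction-free, i.e. that in the order in which one solves for the $J_{i,j}$, every required quantity is a ratio of sums of monomials in previously determined positive quantities and never a difference. Infinite matrices add a minor wrinkle (one must check the relevant sums converge / stabilize, which follows from the banded structure and the stabilization conditions analogous to (\ref{eq:cond1}), (\ref{eq:cond1.5})), but the real content is the positivity of the elimination, which is exactly the delicate combinatorial-algebraic lemma of Noumi-Yamada that I would invoke or reprove with their ordering of variables. Given (a), steps (b) and (c) are routine translations between the three equivalent languages — matrices, the recursion (\ref{eq:evolution}), and jeu de taquin / row insertion — already set up in \S\ref{sec:3} and \S\ref{sec:4}.
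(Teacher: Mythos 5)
First, a point of reference: the paper itself does not prove Theorem \ref{thm:noumi-yamada}. It is imported verbatim from Noumi--Yamada \cite[\S 2]{noumi2004} (the sentence introducing it reads ``The following theorem is given by Noumi-Yamada''), and no proof follows it in the text. So there is no in-paper argument to compare yours against; the honest benchmark is the cited source.

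Judged as a self-contained proof, your proposal is a reasonable roadmap but not yet a proof, because the two claims that carry all the content are exactly the ones you defer. For (i), the entire difficulty is the positivity of the elimination --- that solving (\ref{eq:mat-equation}) for the $J_{i,j}$ in a suitable order never forces a subtraction --- and you explicitly punt this to ``the delicate combinatorial-algebraic lemma of Noumi--Yamada that I would invoke or reprove.'' For (ii), the identification of the tropicalized factorization with Schensted insertion is likewise asserted (``essentially an identification of variables'') rather than carried out. Your base case also contains a slip suggesting the dictionary has not been pinned down: $\Tableau{\alpha_1}$ is a single box in the \emph{first} row containing the \emph{entry} $\alpha_1$, not a box located in row $\alpha_1$; consequently the check ``$P_{i,1}=Q_{i,1}$ matches the number of $1$'s in row $i$'' does not come out as you state it, and one must first fix which index of $J_{i,j}$ records the row and which records the entry (note that $J_k$ has its first $k-1$ entries trivial, which constrains this choice) before the induction on $\ell$ can even be set up. None of this is fatal --- the outline is consistent with how the result is actually proved --- but as written the proposal reduces to the same citation the paper makes, with the added risk of an index mismatch in part (ii).
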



\subsection{$P$-tableau associated with standard tableau}\label{sec:5.3}

We now proceed to the discrete Toda equation (\ref{eq:Laxform}), which is equivalent to
\begin{equation}\label{eq:FE=EF}
F(V_j^{t+1})E(I_j^t)=E(I_{j+1}^t)F(V_j^{t}),
\end{equation}
where $I_j^t=(I_{1,j}^t,I_{2,j}^t,\dots)$ and $V_j^t=(V_{1,j}^t,V_{2,j}^t,\dots)$. 
With regarding $I_j^t,V_j^t$ as inputs and $I_{j+1}^t,V_j^{t+1}$ as outputs, we display (\ref{eq:FE=EF}) diagrammatically as
$
\sayou{
& V_j^t& \\
I_j^t&\- & I_{j+1}^{t}\\
& V_{j}^{t+1}
}$.
The diagram
$\sayou{
& \bm{W}_j^t& \\
\bm{Q}_j^t&\+ & \bm{Q}_j^{t+1}\\
& \bm{W}_{j+1}^t 
}$
in \S\ref{sec:4.2} is nothing but its tropicalization.

With this idea, we associate the equation
\begin{equation}\label{eq:FEE=EEF}
F(V')E(I^{(k)})\cdots E(I^{(1)})=E({I^{(k)}}')\cdots E({I^{(1)}}')F(V)
\end{equation}
with the vertical diagram
\begin{equation}\label{eq:vertical-diagram}
\sayou
{ & V & \\
I^{(1)} & \- & {I^{(1)}}'\\
& V^{(1)} & \\
I^{(2)} & \- & {I^{(2)}}'\\
& V^{(2)} & \\
& \vdots & \\
I^{(k)} & \- & {I^{(k)}}'\\
& V'\hbox to 0pt{$=V^{(k)}$} & 
}.
\end{equation}
Let us transform the upper triangle matrices on the both sides of (\ref{eq:FEE=EEF}) into the geometric tableaux (\ref{eq:mat-equation}) form:
\[
E(I^{(k)})\cdots E(I^{(1)})=E_1(J_1)\cdots E_k(J_k),\quad 
E({I^{(k)}}')\cdots E({I^{(1)}}')=E_1(J'_1)\cdots E_k(J'_k).
\]
This leads the new equation
\begin{equation}\label{eq:matrix-rectification}
F(V')E_1(J_1)\cdots E_k(J_k)=E_1(J_1')\cdots E_k(J_k')F(V),
\end{equation}
or equivalently,
\begin{equation}\label{eq:matrix-rectification'}
E_1(J_1)\cdots E_k(J_k)F(V)^{-1}=F(V')^{-1}E_1(J_1')\cdots E_k(J_k')
\tag{\ref{eq:matrix-rectification}$'$}
\end{equation}
which we will display diagrammatically as
\[
\sayou{&V&\\
(J_1,\dots,J_k)&\-&(J'_1,\dots,J'_k)\\
&V'&
}.
\]

\begin{prop}\label{prop:subtraction-free}
Equation $(\ref{eq:matrix-rectification})$ determines a birational map $\{J_i,V\}\leftrightarrow \{J_i',V'\}$, which has the following proprieties:
\begin{enumerate}
\def\labelenumi{$(\theenumi)$}
\item Each entry of $J'_i$ and $V'$ is expressed as a subtraction-free rational function of entries of $J_i$ and $V$.
\item Each entry of $J_i$ and $V$ is expressed as a subtraction-free rational function of entries of $J'_i$ and $V'$.
\end{enumerate}
\end{prop}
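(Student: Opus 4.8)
The plan is to establish Proposition~\ref{prop:subtraction-free} by reducing it, via the geometric--tableau normalization already performed, to the subtraction-free structure of the two constituent pieces: the Toda-type exchange relation (\ref{eq:FE=EF}) and the geometric tableau relation (\ref{eq:mat-equation}). First I would observe that Equation (\ref{eq:matrix-rectification}) is by construction the result of conjugating Equation (\ref{eq:FEE=EEF}) on both sides through the change of variables $E(I^{(k)})\cdots E(I^{(1)})=E_1(J_1)\cdots E_k(J_k)$ and its primed analogue; and that (\ref{eq:FEE=EEF}) is in turn obtained by stacking the elementary exchanges (\ref{eq:FE=EF}) $k$ times as in the diagram (\ref{eq:vertical-diagram}). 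So the composite map $\{J_i,V\}\mapsto\{J_i',V'\}$ factors as: invert the geometric-tableau map $\{J_i\}\mapsto\{I^{(a)}\}$, apply the $k$-fold Toda iteration $\{I^{(a)},V\}\mapsto\{{I^{(a)}}',V'\}$, then apply the geometric-tableau map $\{{I^{(a)}}'\}\mapsto\{J_i'\}$.

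The key steps, in order: (1) Record that the Toda exchange (\ref{eq:FE=EF}) is subtraction-free in both directions — this is exactly the content of (\ref{eq:subtractionfree}), where each $I_{i+1,j+1}^t,V_{i,j}^{t+1}$ is a ratio of sums of entries of $I_j^t,V_j^t$ with positive coefficients, and the relation is symmetric enough that the inverse ($\{I_{j+1}^t,V_j^{t+1}\}\mapsto\{I_j^t,V_j^t\}$) is likewise subtraction-free; hence the $k$-fold composition along (\ref{eq:vertical-diagram}) is subtraction-free in both directions, being a composition of such maps. (2) Invoke Theorem~\ref{thm:noumi-yamada}(i): the geometric-tableau map $\{I_{i,j}\}\mapsto\{J_{i,j}\}$ of (\ref{eq:mat-equation}) is subtraction-free. (3) Check that its \emph{inverse} $\{J_{i,j}\}\mapsto\{I_{i,j}\}$ is also subtraction-free — this can be seen either directly from the Gaussian-elimination solution of (\ref{eq:mat-equation}) (the $E_k(J_k)$ factors are upper-triangular with unit superdiagonal, so solving for the $E(I_a)$ factors stays in the positive cone), or by a combinatorial argument using the reversibility of row insertion. (4) Compose: $\{J_i,V\}\mapsto\{I^{(a)},V\}\mapsto\{{I^{(a)}}',V'\}\mapsto\{J_i',V'\}$ is subtraction-free as a composition of three subtraction-free maps, giving (1); running the composition backwards gives (2). (5) Conclude that the map is birational since each factor is, and its inverse is realized by the backward composition; alternatively, birationality of (\ref{eq:matrix-rectification}) follows from the birationality of (\ref{eq:FEE=EEF}) (stacking of the birational (\ref{eq:FE=EF})) together with the birational change of variables.

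I expect the main obstacle to be step (3) — verifying that the geometric-tableau correspondence is subtraction-free in the \emph{reverse} direction. Theorem~\ref{thm:noumi-yamada}(i) as quoted only asserts subtraction-freeness of $\{I\}\mapsto\{J\}$; the reverse direction is exactly the statement that undoing a sequence of geometric row insertions keeps one in the positive cone, and while this is true and essentially standard (it mirrors the invertibility of classical row bumping), it does require a short argument rather than a citation. I would handle it by working through the Gaussian elimination for (\ref{eq:mat-equation}) one column at a time: the rightmost factor $E_\ell(J_\ell)$ is recovered as a product of elementary pivots whose entries are manifestly ratios of minors that, because of the specific staircase shape of the $E(I_a)$, reduce to subtraction-free expressions; then peel off $E_\ell(J_\ell)$ and induct on $\ell$. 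A secondary, minor obstacle is bookkeeping the roles of $V$ and $V'$: they pass through the conjugation untouched on the $F(\cdot)$ side, so one must simply note that the change of variables acts only on the $E$-factors and leaves the subtraction-free dependence of $V'$ on $(J_i,V)$ inherited directly from the Toda iteration.
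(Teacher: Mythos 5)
Your overall strategy differs from the paper's: the paper never returns to the $I$-variables. Its proof verifies by a direct computation that the single exchange $E_i(A)F(B)^{-1}=F(B')^{-1}E_i(A')$ is a birational map that is subtraction-free in both directions, and then sweeps $F(V)^{-1}$ leftward through $E_1(J_1)\cdots E_k(J_k)$ one factor at a time, producing the chain $E_1(J_1)\cdots E_k(J_k)F(V)^{-1}=\cdots=F(V^{(k)})^{-1}E_1(J'_1)\cdots E_k(J'_k)$. Everything stays in the $J$-coordinates, so no appeal to Theorem \ref{thm:noumi-yamada} or to a return trip through (\ref{eq:FEE=EEF}) is needed.

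The gap in your route is step (3), and it is not the ``short argument'' you anticipate: the geometric-tableau map $\{I_{i,j}\}\mapsto\{J_{i,j}\}$ of (\ref{eq:mat-equation}) is not injective, so it admits no inverse, rational or otherwise. Count parameters: the left-hand side of (\ref{eq:mat-equation}) carries $\ell$ unconstrained vectors $I_k$, while the right-hand side carries $\ell$ vectors $J_k$ with $\binom{\ell}{2}$ entries pinned to $1$, so the fibers of $\{I\}\mapsto\{J\}$ are $\binom{\ell}{2}$-dimensional. Tropically this is exactly the fact that many insertion sequences produce the same $P$-tableau --- the phenomenon exploited in \S\ref{sec:5.3} and around (\ref{eq:replace_standard_tableaux}); the invertibility of classical row bumping that you invoke requires retaining the recording data, which is discarded here. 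Consequently ``invert the geometric-tableau map'' is not available. You would instead have to choose a subtraction-free section $\{J_i\}\mapsto\{I^{(a)}\}$ and then show that the resulting $\{J'_i,V'\}$ is independent of that choice, which essentially amounts to the uniqueness of the factorization (\ref{eq:matrix-rectification}) that you are trying to establish. The same defect undermines step (5): birationality of (\ref{eq:matrix-rectification}) cannot be inherited from (\ref{eq:FEE=EEF}) through a non-birational change of variables. Your step (1) is sound (the exchange (\ref{eq:FE=EF}) is indeed subtraction-free in both directions), and the cleanest repair is to run that same two-way elementary-exchange computation directly on the factors $E_i(J_i)$ against $F(V)^{-1}$ --- which is precisely the paper's proof.
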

\begin{proof}
By straightforward calculations, one verifies that the equation \[
E_i(A)F(B)^{-1}=F(B')^{-1}E_i(A')
\] 
determines a birational map $\{A,B\}\leftrightarrow \{A',B'\}$ for any $i$.
Here each entry of $A',B'$ is expressed as a subtraction-free rational function of entries of $A$ and $B$.
With this fact, we have
\begin{align*}
E_1(J_1)\cdots E_k(J_k)F(V)^{-1}&=E_1(J_1)\cdots E_{k-1}(J_{k-1})F(V^{(1)})^{-1}E_k(J'_k)\\
&=E_1(J_1)\cdots E_{k-2}(J_{k-2})F(V^{(2)})^{-1}E_{k-1}(J'_{k-1})E_k(J'_k)\\
&=\cdots=F(V^{(k)})^{-1}E_1(J'_1)\cdots E_{k-1}(J'_{k-1})E_k(J'_k),
\end{align*}
where each entry of $J_i'$ and $V^{(i)}$ are subtraction-free function of positive vectors of $J_i$ and $V$.
These statements are also true if $\{J_i,V\}$ and $\{J_i',V'\}$ are exchanged with each other.
\end{proof}

Since all the rational maps in Proposition \ref{prop:subtraction-free} are subtraction-free, the correspondence $\{J_i,V\} \leftrightarrow \{J_i',V'\}$ is one-to-one if we restrict ourselves to real and positive $J_i,V$.
Let $P_i=\o{J_i}$ and $W_i=\o{V}$ be tropical variables.
By the principle of tropicalization (Proposition \ref{prop:axiom-trop-principle}), we obtain the one-to-one tropical map $\{P_i,W\}\leftrightarrow \{P_i',W'\}$, which will be diagrammatically written as
\begin{equation}\label{eq:diagram_WP=PW}
\sayou{&W&\\
(P_1,\dots,P_k)&\+&(P'_1,\dots,P_k')\\
&W'&
}.
\end{equation}

Applying Theorem \ref{thm:noumi-yamada} (ii) to the data $(P_1,\dots,P_k)$, we can identify it with some Young tableau, which we will call the {\it $P$-tableau}.
For example, let us consider the tableau at the beginning of \S\ref{sec:5.1}:
\[
\Tableau{
\bl & \bl & \bl & 1 & 2\\
\bl & 1 & 2 & 3\\
1 & 2
},\qquad
W=\left(
\haiti{
1&1&1&1\\
0&0&1&2\\
0&1&2&2
}
\right)
\]
and the sequence of jeu de taquin slides defined by the standard tableau
\[
\Tableau{
\lgray 1 & \lgray 2 & \lgray 3\\
\lgray 4
}.
\]
The order of row numbers from which jeu de taquin slides start is $(2^\mathrm{nd},1^\mathrm{st},1^\mathrm{st},1^\mathrm{st})$.
The $P$-tableau associated with this sequence is 
\[
\Tableau{2}\la \Tableau{1}\la \Tableau{1}\la \Tableau{1}=\Tableau{1&1&1\\2}.
\]
When the jeu de taquin slides are applied, the outside corners in the $3^\mathrm{rd},1^\mathrm{st},2^\mathrm{nd},3^\mathrm{rd}$ rows are removed. 
Finally we obtain the rectified tableau
\[
\Tableau{1&1&1&2\\2&2&3},\qquad 
W=\left(
\haiti{
0&1&1&1\\
0&0&2&3\\
0&0&0&0
}
\right).
\]
With use of $P$-tableaux, the diagram (\ref{eq:diagram-of-rectification}) is now rewritten as
\begin{equation}\label{eq:example-of-action}
\sayou{
&(1,0,0)&&(1,0,1)&&(1,1,2)&&(1,2,2)&\\
\Tableau{1&1&1\\2}&\+&\Tableau{1&2&2\\3}&\+&\Tableau{1&2&3\\3}&\+&\Tableau{1&2&3\\3}&\+&\Tableau{1&2&3\\3}\\
&(0,0,0)&&(1,0,0)&&(1,2,0)&&(1,3,0)&
}.
\end{equation}
The tableau on the rightmost position corresponds to the sequence of row insertions
\[
\Tableau{3}\la \Tableau{1}\la \Tableau{2}\la \Tableau{3}=\Tableau{1&2&3\\3}.
\]

Note that the $P$-tableau does not change if one replaces the standard tableau with
\begin{equation}\label{eq:replace_standard_tableaux}
\Tableau{\lgray 1&\lgray 2&\lgray 4\\\lgray 3}\
(=1^\mathrm{st},2^\mathrm{nd},1^\mathrm{st},1^\mathrm{st})
\qquad \mbox{or}\qquad 
\Tableau{\lgray 1&\lgray 3&\lgray 4\\\lgray 2}\ 
(=1^\mathrm{st},1^\mathrm{st},2^\mathrm{nd},1^\mathrm{st})
.
\end{equation}
This is the essential reason why the rectification is unique.

\subsection{Diagrammatic expression of the map (\ref{eq:diagram_WP=PW})}\label{sec:5.4}

As we have seen above, the matrix equation (\ref{eq:matrix-rectification'}) 
boils down to the simple equation
\begin{equation}\label{eq:EF=FE}
E_p(J)F(V)^{-1}=F(V')^{-1}E_p(J'),
\quad
V=(V_1,V_2,\dots),\ 
J=(1,\dots,1,J_p,J_{p+1},\dots)
\end{equation}
for $1\leq p\leq k$ (see the proof of Proposition \ref{prop:subtraction-free}).
Let $\{J,V\}\mapsto \{J',V'\}$ be the subtraction-free rational map defined by (\ref{eq:EF=FE}).
Then, its tropicalization $\{P,W\} \mapsto \{P',W'\}$ can be obtained by almost same calculations that we have done in \S \ref{sec:2}.
In fact, we have
\[
\begin{aligned}
&P'_{i+1}=
\begin{cases}
\min[P_{i+1},W_{i+1}], & i=p-1\\
(\min[P_{i+1},W_{i+1}]-\min[P_{i},W_{i}])+P_{i}, & i\geq p
\end{cases},\\
&W'_{i}=
\begin{cases}
W_i, & i<p-1\\
\min[P_{i+1},W_{i+1}]+W_i, & i=p-1 \\
(\min[P_{i+1},W_{i+1}]-\min[P_{i},W_{i}])+W_{i}, & i\geq p
\end{cases}.
\end{aligned}
\]
Note that, by putting $P_i\equiv 0$ for all $1\leq i<p$, they are simplified as
\begin{equation}\label{eq:rel_trop}
\begin{aligned}
&P'_{i+1}=(\min[P_{i+1},W_{i+1}]-\min[P_{i},W_{i}])+P_{i},\\
&W'_{i}=(\min[P_{i+1},W_{i+1}]-\min[P_{i},W_{i}])+W_{i}.
\end{aligned}
\end{equation}
(Compare with (\ref{eq:evolution}).)

The system (\ref{eq:rel_trop}) can be realized by the \textit{kicker-and-ball model}~\cite{iwao2018discrete}, which was first introduced as a tropicalization (= ultradiscretization) of the \textit{discrete relativistic Toda equation}.
Let $P=(0,\dots,0,P_p,P_{p+1},\dots)$ and $W=(W_1,W_2,\dots)$ be sequences of non-negative integers.
Consider infinitely many boxes aligned in a half-line towards the right and put the following objects on them:
\begin{itemize}
\item $P_i$ kickers at the $i^\mathrm{th}$ site from the left.
\item $W_i$ balls at the $i^\mathrm{th}$ site from the left.
\end{itemize}
For example, if $P=(0,2,0,2,1,0,0,\dots)$ and $W=(0,3,1,1,0,0,0,\dots)$, we draw
\[
\ktableau{ & \b\!\k\!\b\!\k\!\b & \b & \k\!\b\!\k &\k & & }\cdots
\]

To obtain $\{P',W'\}$, we move the kickers ($k$) and the balls ($\circ$) by the following rules:
\begin{itemize}
\item Kickers who stand nearby a ball kick one out into the box on their left.
(A ball that is kicked out from the leftmost box will disappear.)
\item Kickers who have no balls to kick out proceed to the box on their right.
\end{itemize}
For the example above, we obtain
\[
\ktableau{\b\b & \k\!\b\!\k & \b\b & \k &\k & \k& }\cdots,
\]
and then we find $P'=(0,2,0,1,1,1,0,\dots)$ and $W'=(2,1,2,0,0,0,0,\dots)$.
This situation is expressed as
\[
\sayou{
&(0,3,1,1,0,0,0)&& \\
(0,2,0,2,1,0,0)&\+&(0,2,0,1,1,1,0) \\
&(2,1,2,0,0,0,0)&
}
\]
or equivalently,
\[
\sayou{
&(0,3,1,1,0,0,0)&& \\
\Tableau{2&2&4&4&5}&\+&\Tableau{2&2&4&5&6} \\
&(2,1,2,0,0,0,0)&
}.
\]

According to (\ref{eq:matrix-rectification'}), the whole procedure to obtain $\{P'_i,W'\} $ from $\{P_i,W\}$ is written diagrammatically as 
\begin{equation}\label{eq:diagram_new_map}
\sayou{&W&\\
P_k&\+&P_k'\\
&W^{(1)}&\\
P_{k-1}&\+&P_{k-1}'\\
&W^{(2)}&\\
& \vdots &\\
P_1&\+&P_1'\\
&W'&
}.
\end{equation}
Here each step is a map described in the previous paragraph.
For example, the diagram
$
\sayou{
&(1,0,0)& \\
\Tableau{1&1&1\\2}&\+&\Tableau{1&2&2\\3}\\
&(0,0,0)&
}
$
is decomposed as
\[
\sayou{
&(1,0,0)& \\
\Tableau{2}&\+&\Tableau{3}\\
&(1,0,0)&\\
\Tableau{1&1&1}&\+&\Tableau{1&2&2}\\
&(0,0,0)&
}.
\]

\subsection{Summary: a commutative diagram}\label{sec:5.5}

Our results are summed up by the commutative diagram:
\begin{equation}\label{eq:diagram_commute}
\def\myspace{2.5em}%
\xymatrix{
\hspace{\myspace}\{Q,W\}\hspace{\myspace}\ar@{|->}[r]^{(\ref{eq:diagram-of-rectification})} 
\ar@{|->}[d]_{\mathrm{Row\ insertion}} 
& \hspace{\myspace}\{Q',W'\}\hspace{\myspace}\ar@{|->}[d]^{\mathrm{Row\ insertion}}\\
\hspace{\myspace}\{P,W\}\hspace{\myspace}\ar@{|->}[r]^{(\ref{eq:diagram_WP=PW})\mathrm{\,(equivalently,\, }(\ref{eq:diagram_new_map}))} & \hspace{\myspace}\{P',W'\}\hspace{\myspace}
}.
\end{equation}
Here the matrix $W$ is equivalent to a skew tableau (Proposition \ref{prop:bijection}), and $Q$ represents a sequence of row numbers at where jeu de taquin slides start (\S \ref{sec:5.1}).

For example, we have
\[
\xymatrix{
\left\{
(2^\mathrm{nd},1^\mathrm{st},1^\mathrm{st},1^\mathrm{st}),
\Tableau{
\bl & \bl & \bl & 1 & 2\\
\bl & 1 & 2 & 3\\
1 & 2
}
\right\}
\ar@{|->}[r]
\ar@{|->}[d]
&
\left\{
(3^\mathrm{rd},1^\mathrm{st},2^\mathrm{nd},3^\mathrm{rd}),
\Tableau{1&1&1&2\\2&2&3}
\right\}
\ar@{|->}[d]
\\
\left\{
\Tableau{1&1&1\\2},
\Tableau{
\bl & \bl & \bl & 1 & 2\\
\bl & 1 & 2 & 3\\
1 & 2
}
\right\}
\ar@{|->}[r]
&
\left\{
\Tableau{1&2&3\\3},
\Tableau{1&1&1&2\\2&2&3}
\right\}
}.
\]
Note that if one replaces $(2^\mathrm{nd},1^\mathrm{st},1^\mathrm{st},1^\mathrm{st})$ at the top left of this diagram with $(1^\mathrm{st},2^\mathrm{nd},1^\mathrm{st},1^\mathrm{st})$ or $(1^\mathrm{st},1^\mathrm{st},2^\mathrm{nd},1^\mathrm{st})$, the bottom row does not change at all (see (\ref{eq:replace_standard_tableaux})).

\section{Application: Proof of the uniqueness of a rectification}\label{sec:6}

By using the results in the previous section, the uniqueness of a rectification now boils down to an relatively easy and purely combinatorial lemma, which we will show below.

A \textit{reverse lattice word} (or a \textit{Yamanouchi word}) is a sequence of positive integers $t_1,t_2,\dots,t_N$ that satisfies the following inequality for each $p$ and $i$:
\[
\sharp(\mbox{$i$'s contained in $t_p,t_{p+1},\dots,t_N$})\geq 
\sharp(\mbox{$(i+1)$'s contained in $t_p,t_{p+1},\dots,t_N$}).
\]
\begin{defi}
Let $U(\mu)$ denote the tableau of shape $\mu$ whose $i^\mathrm{th}$ row contains only $i$ for all $i$.
\end{defi}
\begin{lemma}[See Fulton \protect{\cite[\S 5.2, Lemma 1]{fulton_1996}}]\label{lemma:rev_lattice}
If $t_1,t_2,\dots,t_N$ is a reverse lattice word, we have
\[
\Tableau{t_1}\leftarrow
\Tableau{t_2}\leftarrow
\cdots\leftarrow
\Tableau{t_N}
=
U(\lambda)
\]
for some Young diagram $\lambda$.
\end{lemma}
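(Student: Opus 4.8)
The plan is to translate the statement into one about row insertion of \emph{words}, and then reduce that to a single Pieri-type identity provable by a short second induction. For the first reduction, recall that inserting the one-box tableau $\Tableau{x}$ into a tableau is the same as inserting the letter $x$, and that row insertion is associative; equivalently, Knuth equivalence is a congruence on words, the insertion tableau $P(\cdot)$ is a complete invariant of a Knuth class, and $P(w(T))=T$ for every tableau $T$ (Fulton~\cite[\S 1--\S 3]{fulton_1996}). Hence $\Tableau{t_1}\leftarrow\cdots\leftarrow\Tableau{t_N}=P(w)$ for $w=t_1 t_2\cdots t_N$. If $\lambda$ denotes the content of $w$ (so $\lambda_i=\sharp\{k:t_k=i\}$), then the reverse-lattice inequality at the first position gives $\lambda_1\ge\lambda_2\ge\cdots$, so $\lambda$ is a Young diagram and $P(w)$ has content $\lambda$. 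It therefore suffices to show: \emph{if $w$ is a reverse lattice word with content $\lambda$, then $P(w)=U(\lambda)$.}

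\emph{Induct on the length, peeling the first letter.} The key structural observation is that if $w=t_1 w'$ with $w'=t_2\cdots t_N$, then every suffix of $w'$ is a suffix of $w$, so $w'$ is again a reverse lattice word --- whereas deleting the \emph{last} letter need not preserve this. By the induction hypothesis $P(w')=U(\mu)$, with $\mu$ the content of $w'$. Set $r=t_1$; applying the reverse-lattice inequality to $w$ at the first position with index $r-1$ (when $r\ge2$) gives $\mu_{r-1}>\mu_r$, so $\mu$ admits one extra box in row $r$. Write $\mu^{(r)}$ for the resulting Young diagram (then $\lambda=\mu^{(r)}$; the case $r=1$ carries no condition). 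Since Knuth equivalence is a left congruence, $w\equiv r\cdot w(U(\mu))$, hence $P(w)=P\bigl(r\cdot w(U(\mu))\bigr)=\Tableau{r}\cdot U(\mu)$, where $\cdot$ is the plactic product. So the lemma reduces to the identity $\Tableau{r}\cdot U(\mu)=U\bigl(\mu^{(r)}\bigr)$, valid whenever $\mu^{(r)}$ is a Young diagram.

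\emph{Prove the insertion identity, by induction on the number of rows of $\mu$.} For $\mu=\emptyset$ the hypothesis forces $r=1$ and the identity is trivial. For the step, write $\mu=(\mu_1,\tilde\mu)$ with $\tilde\mu=(\mu_2,\dots)$; reading $U(\mu)$ by rows from the bottom, $w(U(\mu))=\bigl(w(U(\tilde\mu))\bigr)^{+1}\,1^{\mu_1}$, where $(\cdot)^{+1}$ raises every letter by $1$ and $1^{\mu_1}$ is the word of $\mu_1$ ones, so $\Tableau{r}\cdot U(\mu)=P\bigl(r\cdot(w(U(\tilde\mu)))^{+1}\,1^{\mu_1}\bigr)$. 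If $r\ge2$, the word $r\cdot(w(U(\tilde\mu)))^{+1}$ is the $(\cdot)^{+1}$-shift of $(r-1)\cdot w(U(\tilde\mu))$, which by the induction hypothesis applied to $\tilde\mu$ and the index $r-1$ (one checks $\tilde\mu^{(r-1)}$ is again a Young diagram because $\mu^{(r)}$ is) is Knuth equivalent to $w(U(\tilde\mu^{(r-1)}))$. As Knuth equivalence is stable under uniformly shifting the alphabet and under concatenation, $r\cdot(w(U(\tilde\mu)))^{+1}\,1^{\mu_1}\equiv(w(U(\tilde\mu^{(r-1)})))^{+1}\,1^{\mu_1}=w(U(\mu^{(r)}))$, the last equality because $(\mu^{(r)})_1=\mu_1$ and $(\mu^{(r)})_{j}=(\tilde\mu^{(r-1)})_{j-1}$ for $j\ge2$. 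Hence $\Tableau{r}\cdot U(\mu)=P(w(U(\mu^{(r)})))=U(\mu^{(r)})$. If $r=1$, every letter of $w(U(\mu))$ is $\ge1$, so inserting them one by one into $\Tableau{1}$ never dislodges the leading $1$ (the standard fact that a minimal first letter stays in the corner), and the outcome is $P(w(U(\mu)))=U(\mu)$ with a single $1$ prepended to its first row --- which is precisely $U(\mu^{(1)})$.

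\emph{Where the work is.} Strictly speaking there is no difficult step: the proof is two nested inductions glued together by standard facts about the plactic monoid. The one point requiring care --- and where a wrong convention would silently break the argument --- is the second step: one must peel the \emph{first} letter of the reverse lattice word (so that the reverse-lattice property is inherited, which fails for the last letter), and one must pin down correctly which box is adjoined to $\mu$ and verify that $(w(U(\tilde\mu^{(r-1)})))^{+1}\,1^{\mu_1}$ is indeed the reading word of $U(\mu^{(r)})$. After that it is bookkeeping with reading words.
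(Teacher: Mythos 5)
Your argument is mathematically sound and genuinely different from the paper's: you work inside the plactic monoid, peeling the first letter of the word (correctly noting that this, unlike peeling the last letter, preserves the reverse-lattice property), reducing to the Pieri-type identity $\Tableau{r}\cdot U(\mu)=U(\mu^{(r)})$, and proving that by a second induction on the rows of $U(\mu)$ via reading words and alphabet shifts. The bookkeeping checks out: the inequality $\mu_{r-1}\geq \mu_r+1$ does follow from the reverse-lattice condition at $p=1$, $i=r-1$; the decomposition of $w(U(\mu^{(r)}))$ as the shifted word of $U(\tilde\mu^{(r-1)})$ followed by $1^{\mu_1}$ is right; and the $r=1$ case (a leading minimal letter is never bumped, so the rest of the insertion proceeds as if it were absent) is a correct, if tersely justified, standard fact. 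The paper instead gives a bare-hands counting argument: it tracks the quantities $X^k_i=L^k_i+P^k_i$ (letters of value $i$ not yet consumed plus those sitting in the first row), proves $X^k_{i+1}\leq L^k_i$ by induction on the insertion step, concludes the first row is all $1$'s and the bumped word (shifted down by one) is again a reverse lattice word, and recurses on rows. Your route is shorter and more conceptual; the paper's buys complete self-containedness.

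That last point is not cosmetic here, and it is the one real problem with your proposal in context. You invoke the statement that ``the insertion tableau $P(\cdot)$ is a complete invariant of a Knuth class,'' i.e.\ $u\equiv v\Rightarrow P(u)=P(v)$, and you use this hard direction repeatedly (every time you replace a word by the reading word of its $P$-tableau and then apply $P$ again after concatenating). In Fulton's development this is precisely the main theorem of \S 3, from which the uniqueness of rectification --- the theorem this whole section of the paper is re-proving --- is deduced in one line. The paper's footnote to this lemma says explicitly that the elementary proof is given ``to avoid the possibility of circular reasoning,'' and your argument reintroduces exactly that dependence. The circularity is not irreparable, since the plactic theorem admits proofs independent of jeu de taquin (e.g.\ via Greene's invariants), but as written your proof cannot be substituted into this paper without either citing such an independent proof or being rewritten to avoid the hard direction of the Knuth/insertion correspondence.
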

\begin{proof}[Proof] \footnote{An excellent proof of Lemma \ref{lemma:rev_lattice} can be found in Fulton's book~\cite{fulton_1996} but we give another elementary proof in this article to avoid the possibility of circular reasoning.
See Remark \ref{rem:known-things}.} 
We first focus on the $1^\mathrm{st}$ row.
Row-inserting $t_1,\dots,t_N$, we obtain a $1^\mathrm{st}$ row $\Tableau{a_1&a_2&\cdots &a_r}$ and a series $s_1,\dots,s_q$ that are bumped from the $1^\mathrm{st}$ row ($\{a_1,\dots,a_r\}\sqcup\{s_1,\dots,s_q\}=\{t_1,\dots,t_N\}$).
We will prove $a_1=a_2=\dots=a_r=1$ and 
\[
\sharp(\mbox{$i$'s contained in $s_p,s_{p+1},\dots,s_q$})\geq 
\sharp(\mbox{$(i+1)$'s contained in $s_p,s_{p+1},\dots,s_q$})
\]
for all $i>1$ and $p$.
(In other words, $s_1-1,s_2-1,\dots,s_q-1$ is a reverse lattice word.)
Let $T_k:=\Tableau{t_1}\leftarrow
\Tableau{t_2}\leftarrow
\cdots\leftarrow
\Tableau{t_k}$ and
\[
\begin{gathered}
L^k_i:=
\sharp(\mbox{$i$'s contained in $t_{k+1},t_{k+2},\dots,t_N$}),\\
P^k_i:=\sharp(\mbox{$i$'s contained in the $1^\mathrm{st}$ row of $T_k$}).
\end{gathered}
\]
Since $t_1,\dots,t_N$ is a reverse lattice word, the sequence $L^k_1,L^k_2,\dots$ is weakly decreasing for each $k$.
Let $X^k_i:=L^k_i+P^k_i$.
By the definition, the row bumping algorithm is explicitly characterized by $L^{k+1}_i=L^{k}_i-\delta_{i,t_k}$ and $X^{k+1}_i=X^k_i-\delta_{i,\alpha}$, where $\alpha$ is the minimum number with ($\alpha>t_k$ and $P^k_\alpha> 0$). 
($\delta_{i,\alpha}\equiv 0$ if there exists no such $\alpha$.)
Obviously, $X^k_i\geq L^k_i$ and $X^{k}_i\geq X^{k+1}_i$ hold.
We will show $X^k_{i+1}\leq L^k_i$, which implies that $X^k_1,X^k_2,\dots$ is weakly decreasing.
When $k=0$, the claim is trivial because $P^0_i=0$ for all $i$.
Assume that the claim is true for some $k\geq 0$.
Therefore, we have
(i) $i\neq t_k\Rightarrow L^{k+1}_i-X^{k+1}_{i+1}= L^{k}_i-X^{k+1}_{i+1}\geq L^{k}_i-X^{k}_{i+1}$,
(ii) $(i= t_k\mbox{ and }P^k_{i+1}>0)\Rightarrow \alpha=i+1=t_k+1\Rightarrow
L^{k+1}_i-X^{k+1}_{i+1}=L^{k}_i-X^{k}_{i+1}
$,
and
(iii) $(i= t_k\mbox{ and }P^k_{i+1}=0)\Rightarrow P^{k+1}_{i+1}=0\Rightarrow
L^{k+1}_i-X^{k+1}_{i+1}=L^{k+1}_i-L^{k+1}_{i+1}
$.
In each case, we obtain $L^{k+1}_i-X^{k+1}_{i+1}\geq 0$ by the induction hypothesis.
Then $X_{i+1}^k\leq L^k_i$ is proved for all $k\geq 0$.

As $L^{N}_i=0$ for all $i$, we have $X_2^N=X_3^N=\cdots=0$.
On the other hand, we have $X_1^N=L^0_1$ because any $1$ cannot be bumped from the $1^\mathrm{st}$ row.
This means $s_1,\dots,s_q$ contains no $1$'s, and the row $\Tableau{a_1&a_2&\cdots &a_r}$ satisfies $a_1=\dots=a_r=1$.
Moreover, because $X^k_i$ is equal to the number of $i$'s contained in $s_{p+1},\dots,s_q$ ($p$ is the number of numbers bumped from the $1^\mathrm{st}$ row in the first $k$ steps) for $i\geq 2$, and $X^k_1,X^k_2,\dots$ is weakly decreasing, we find $s_1-1,s_2-1,\dots,s_q-1$ is a reverse lattice word.

Repeating this procedure, we show that the $j^\mathrm{th}$ row of $T_N$ contains only $j$ for all $j$.
We conclude $T_N=U(\lambda)$ with $\lambda_i=L^0_i$.
\end{proof}
\begin{cor}\label{cor:U(mu)}
The $P$-tableau associated with any standard tableau of shape $\mu$ must be $U(\mu)$.
\end{cor}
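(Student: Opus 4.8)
The plan is to read off, from a standard tableau $Y$ of shape $\mu$, the sequence of rows from which the corresponding jeu de taquin slides are performed, to observe that this sequence is a reverse lattice word, and then to invoke Lemma~\ref{lemma:rev_lattice}. Concretely, put $N=|\mu|$ and, for $j\in\{1,\dots,N\}$, let $r(j)$ denote the index of the row of $Y$ containing the entry $j$. By the description of rectification in \S\ref{sec:5.1}, the inside corners from which the slides are carried out carry the entries of $Y$ in \emph{decreasing} order, so the $m$-th slide starts from the $q_m$-th row, where $q_m:=r(N+1-m)$. By Theorem~\ref{thm:noumi-yamada}~(ii) together with the construction of the $P$-tableau in \S\ref{sec:5.3}, the $P$-tableau associated with $Y$ is the tableau obtained by successively row-inserting $q_1,q_2,\dots,q_N$.

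The key step is to show that $(q_1,\dots,q_N)$ is a reverse lattice word. Fix $p$ and set $m=N+1-p$. After the substitution $m'\mapsto N+1-m'$, the suffix $q_p,q_{p+1},\dots,q_N$ becomes the list of rows $r(1),r(2),\dots,r(m)$ of the entries $1,2,\dots,m$ of $Y$. Since $Y$ is standard, the set of its boxes carrying an entry $\le m$ is a Young subdiagram of $\mu$: if a box in row $i+1$ has entry $\le m$, then the box immediately above it in row $i$ carries a strictly smaller entry, hence also $\le m$. Therefore, for every $i$, the number of entries $\le m$ lying in row $i$ of $Y$ is at least the number lying in row $i+1$; that is, $\sharp\{p\le m'\le N: q_{m'}=i\}\ge\sharp\{p\le m'\le N: q_{m'}=i+1\}$. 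As $p$ was arbitrary, $(q_1,\dots,q_N)$ satisfies the defining inequalities of a reverse lattice word.

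Finally, applying Lemma~\ref{lemma:rev_lattice} with $t_m=q_m$ shows that the $P$-tableau equals $U(\lambda)$ for some Young diagram $\lambda$, and, by the concluding line of the proof of that lemma, $\lambda_i=\sharp\{m: q_m=i\}$. Reindexing once more, this count equals $\sharp\{j: r(j)=i\}$, the number of boxes in the $i$-th row of $Y$, which is $\mu_i$; hence $\lambda=\mu$ and the $P$-tableau is $U(\mu)$. I expect the only delicate point to be the bookkeeping between the temporal order of the slides and the decreasing order of the entries of $Y$ (together with the matching reindexing of the suffixes); the combinatorial core — that the smallest entries of a standard tableau occupy a Young diagram — is elementary, and everything else is a direct appeal to Lemma~\ref{lemma:rev_lattice}.
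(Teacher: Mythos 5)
Your proof is correct and follows essentially the same route as the paper's: both read off the row indices of the entries of the standard tableau in decreasing order, observe that this word is a reverse lattice word because the boxes with entries $\le m$ form a Young subdiagram, and then invoke Lemma~\ref{lemma:rev_lattice} together with the count $\lambda_i=\mu_i$. Your version merely spells out the reindexing and the Young-subdiagram argument that the paper leaves implicit.
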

\begin{proof}
Let $S$ be a standard tableau of shape $\mu$ and size $N$.
Assume $(N-i+1)$ is contained in the $t_i^\mathrm{th}$ row of $S$.
The sequence $t_1,t_2,\dots,t_N$ should be a reverse lattice word because the subdiagram of $S$ that consists of $1,\dots,N-i$ is still a Young diagram for any $i$.
Therefore, the $P$-tableau associated with $S$ is $U(\lambda)$ for some $\lambda$.
Obviously, $\lambda=\mu$.
\end{proof}

The uniqueness of a rectification is now almost trivial from the diagram (\ref{eq:diagram_commute}) and Corollary \ref{cor:U(mu)}.

\begin{rem}\label{rem:known-things}
The notion of the ``$P$-tableau associated with a standard tableau'' is equivalent to the ``$P$-tableau of a reverse lattice word,'' which is well-known in the context of combinatoric.
For example, in Fulton's textbook~\cite[\S 5.3]{fulton_1996}, the standard tableau whose $i^\mathrm{th}$ row contains $t_i$ is denoted by $U(w)$ for a reverse lattice word $w=t_1t_2\dots t_N$.
Therein, the $P$-tableau associated with $U(w)$ is denoted by $P(w)$.
\end{rem}

\subsection*{Acknowledgment}
This work was supported by the Research Institute for Mathematical Sciences, an International Joint Usage/Research Center located in Kyoto University.
This work was also supported by JSPS KAKENHI:19K03605.

\appendix

\section{Notes on Mathematical Logic}\label{sec:appA}

In this appendix, we shortly review a few of basic notations of mathematical logic and we give a proof of Lemma \ref{prop:inherit}.

\subsection{Basic definitions}

In this section, we follow the notations in the textbooks of mathematical logic~\cite{marker2006model,tent2012course}.
\begin{defi}
A {\it language} $\mathcal{L}$ is a set of function symbols, relation symbols, and constant symbols.
Each function symbol $f$ is associated with a natural number $n_f$, and each relation symbol $R$ is associated with a natural number $n_R$.
\end{defi}
We say that ``$f$ is an $n_f$-ary function'' and ``$R$ is an $n_R$-ary relation.''

\begin{defi}
An $\mathcal{L}$-structure $\mathcal{M}$ is a collection of the following objects:
\begin{itemize}
\item An non-empty set $M$, which is called the domain.
\item A map $f^\mathcal{M}:M^{n_f}\to M$ for each function symbol $f\in \mathcal{L}$.
\item A set $R^\mathcal{M}\subset M^{n_R}$ for each relation symbol $R\in \mathcal{L}$.
\item An element $c^\mathcal{M}\in M$ for each constant symbol $c\in \mathcal{L}$.
\end{itemize}
These $f^\mathcal{M}$, $R^\mathcal{M}$, $c^\mathcal{M}$ are called an {\it interpretation} of $f,R,c$, respectively.
\end{defi}
We often write ``$R^\mathcal{M}(m_1,\dots,m_n)$'' instead of ``$(m_1,\dots,m_n)\in R^{\mathcal{M}}$.''

\begin{defi}
Let $\mathcal{M},\mathcal{N}$ be $\mathcal{L}$-structures and $M,N$ be their domains.
A map $h:M\to N$ is called a {\it morphism of $\mathcal{L}$-structures} if:
\begin{itemize}
\item $h(f^\mathcal{M}(m_1,\dots,m_n))=f^\mathcal{N}(h(m_1),\dots,h(m_n))$ for any $m_1,\dots,m_n\in M$,
\item $h(R^\mathcal{M}(m_1,\dots,m_n))\Rightarrow R^\mathcal{N}(h(m_1),\dots,h(m_n))$ for any $m_1,\dots,m_n\in M$,
\item $h(c^\mathcal{M})=c^\mathcal{N}$ for any constant symbol $c\in \mathcal{L}$.
\end{itemize}
\end{defi}

\begin{defi}
An $\mathcal{L}$-term is a sequence of constant symbols, function symbols, and variables $x_1,x_2,\dots$ that is defined recursively as follows:
\begin{itemize}
\item All constant symbols and variables are $\mathcal{L}$-terms.
\item If $t_1,\dots,t_n$ are $\mathcal{L}$-terms and $f$ is a $n_f$-ary function symbol, then $f(t_1,\dots,t_n)$ is an $\mathcal{L}$-term.
\end{itemize}
\end{defi}

\begin{defi}\label{defi:formula}
An $\mathcal{L}$-formula is a sequence of $\mathcal{L}$-terms,~$=,\neg,\wedge$, and $\exists$ that is defined recursively as follows:
\begin{enumerate}
\def\theenumi{\hbox{\hfil{$\mathrm{\roman{enumi}}$}\hfil}}
\item If $t_1$ and $t_2$ are $\mathcal{L}$-terms, then $t_1=t_2$ is an $\mathcal{L}$-formula.
\item If $t_1,\dots,t_n$ are $\mathcal{L}$-terms and $R$ is an $n$-ary relation symbol, then $R(t_1,\dots,t_n)$ is an $\mathcal{L}$-formula.
\item If $\Psi_1$ and $\Psi_2$ are $\mathcal{L}$-formulas, then $\Psi_1\wedge \Psi_2$ is an $\mathcal{L}$-formula.
\item If $\Psi$ is an $\mathcal{L}$-formula and $x$ is a variable, $\exists x \Psi$ is an $\mathcal{L}$-formula.
\item If $\Psi$ is an $\mathcal{L}$-formula, then $\neg \Psi$ is an $\mathcal{L}$-formula.
\end{enumerate}
\end{defi}

The following abbreviations are often used:
\begin{itemize}
\item $\Psi_1\lor \Psi_2$ denotes $\neg(\neg\Psi_1\wedge\neg\Psi_2)$.
\item $\Psi_1\to \Psi_2$ denotes $\neg(\Psi_1\wedge\neg \Psi_2)$.
\item $\Psi_1\leftrightarrow \Psi_2$ denotes $(\Psi_1\to \Psi_2)\wedge(\Psi_2\to \Psi_1)$.
\item $\forall x \Psi$ denotes $\neg (\exists x\neg \Psi)$.
\end{itemize}

\begin{defi}[Negation-free formula]\label{def:nag-free-formula}
An $\mathcal{L}$-formula is called \textit{negation-free}\footnote{Note that our `negation-free formulas' do not admit the quantifier $\forall$. } if it is consisted of $\mathcal{L}$-terms,~$=,\wedge,\lor$, and $\exists$.
\end{defi}

A variable $x$ is said to be {\it free} if it does not occur within the scope of a quantifier $\exists x$.
If an $\mathcal{L}$-formula $\varphi$ contains free variables $x_1,x_2,\dots,x_n$, we often denote it by $\varphi(x_1,\dots,x_n)$.

\begin{defi}
For an $\mathcal{L}$-structure $\mathcal{M}$, an $\mathcal{L}$-formula $\varphi(x_1,\dots,x_n)$, and an element $(m_1,\dots,m_n)\in M^n$, we define 
\[
\mathcal{M}\models \varphi(m_1,\dots,m_n)
\]
recursively as follows:
\begin{itemize}
\item If $t_1^\mathcal{M}(m_1,\dots,m_n)=t_2^\mathcal{M}(m_1,\dots,m_n)$, then $\mathcal{M}\models (t_1=t_2)(m_1,\dots,m_n)$.
\item If $R^\mathcal{M}(t_1^\mathcal{M}(m_1,\dots,m_n),\dots,t_l^\mathcal{M}(m_1,\dots,m_n))$, then\\ 
$\mathcal{M}\models (R(t_1,\dots,t_l))(m_1,\dots,m_n)$.
\item If both $\mathcal{M}\models\Psi_1(m_1,\dots,m_n)$ and $\mathcal{M}\models\Psi_2(m_1,\dots,m_n)$ are satisfied, then $\mathcal{M}\models(\Psi_1\wedge\Psi_2)(m_1,\dots,m_n)$.
\item If there exists $a\in M$ with $\mathcal{M}\models \Psi(m_1,\dots,a,\dots,m_n)$, then\\
$\mathcal{M}\models \exists x\Psi(m_1,\dots,x,\dots,m_n)$.
\item If $\mathcal{M}\not\models\Psi(m_1,\dots,m_n)$, then $\mathcal{M}\models\neg\Psi(m_1,\dots,m_n)$.
\end{itemize}
\end{defi}
If $\mathcal{M}\models \Psi(m_1,\dots,m_n)$, we say ``$\Psi(m_1,\dots,m_n)$ is {\it true} over $\mathcal{M}$.''

\subsection{Proof of Lemma \ref{prop:inherit}.}\label{sec:appA.2}

Let $\mathcal{L}$ be a language, and  $\mathcal{M},\o{\mathcal{M}}$ be $\mathcal{L}$-structures.
We let $M$ and $\o{M}$ denote the domain of $\mathcal{M}$ and $\o{\mathcal{M}}$, respectively.
Consider a morphism $\mathcal{M}\to \o{\mathcal{M}}$ of $\mathcal{L}$-structures.

Assume $\mathcal{M}\models \psi(a_1,\dots,a_n)$ for a negation-free $\mathcal{L}$-formula $\psi(x_1,\dots,x_n)$ and $(a_1,\dots,a_n)\in M^n$.
We prove $\o{\mathcal{M}}\models \o{\psi}(\o{a_1},\dots,\o{a_n})$ by induction (See Definitions \ref{defi:formula} and \ref{def:nag-free-formula}).
First, for $t_1=t_2$, we have
\begin{align*}
&\mathcal{M}\models (t_1=t_2)(a_1,\dots,a_n) \\
&\Rightarrow
t_1^\mathcal{M}(a_1,\dots,a_m)=t_2^\mathcal{M}(a_1,\dots,a_m)\\
&\Rightarrow
t_1^{\o{\mathcal{M}}}(\o{a_1},\dots,\o{a_m})=t_2^{\o{\mathcal{M}}}(\o{a_1},\dots,\o{a_m})\qquad (\because \mbox{$x\mapsto \overline{x}$ is a $\mathcal{L}$-morphism})\\
&\Rightarrow
\o{\mathcal{M}}\models (\o{t_1}=\o{t_2})(\o{a_1},\dots,\o{a_n}).
\end{align*}
The same argument works in the case of $R(t_1,\dots,t_n)$.
Next assume that the assertion holds for $\Psi_1(x_1,\dots,x_n)$ and $\Psi_2(x_1,\dots,x_n)$.
Then, we have
\begin{align*}
\mathcal{M}\models (\Psi_1\wedge \Psi_2)(a_1,\dots,a_n) & \Rightarrow
\mathcal{M}\models \Psi_1(a_1,\dots,a_n)\quad \mbox{and}\quad \mathcal{M}\models  \Psi_2(a_1,\dots,a_n)\\
& \Rightarrow
\o{\mathcal{M}}\models \o{\Psi_1}(\o{a_1},\dots,\o{a_n})\quad \mbox{and}\quad \o{\mathcal{M}}\models  \o{\Psi_2}(\o{a_1},\dots,\o{a_n})\\
& \Rightarrow
\o{\mathcal{M}}\models (\o{\Psi_1\wedge \Psi_2})(\o{a_1},\dots,\o{a_n}),
\end{align*}
and
\begin{align*}
\mathcal{M}\models (\Psi_1\lor \Psi_2)(a_1,\dots,a_n) 
& \Rightarrow
\mathcal{M}\not\models (\neg\Psi_1\wedge\neg\Psi_2)(a_1,\dots,a_n)\\
& \Rightarrow
\mathcal{M}\models \Psi_1(a_1,\dots,a_n)\quad \mbox{or}\quad \mathcal{M}\models  \Psi_2(a_1,\dots,a_n)\\
& \Rightarrow
\o{\mathcal{M}}\models \o{\Psi_1}(\o{a_1},\dots,\o{a_n})\quad \mbox{or}\quad \o{\mathcal{M}}\models  \o{\Psi_2}(\o{a_1},\dots,\o{a_n})\\
& \Rightarrow
\o{\mathcal{M}}\not\models (\neg\o{\Psi_1}\wedge \neg\o{\Psi_2})(\o{a_1},\dots,\o{a_n})\\
& \Rightarrow
\o{\mathcal{M}}\models (\o{\Psi_1\lor \Psi_2})(\o{a_1},\dots,\o{a_n}).
\end{align*}
Moreover, 
\begin{align*}
\mathcal{M}\models \exists x\Psi(x,a_2,\dots,a_n)
&\Rightarrow
\mbox{there exists some $a_1\in M$ with }\mathcal{M}\models \Psi(a_1,a_2,\dots,a_n)\\
&\Rightarrow
\mbox{there exists some $a_1\in M$ with }\o{\mathcal{M}}\models \o{\Psi}(\o{a_1},\o{a_2},\dots,\o{a_n})\\
&\Rightarrow
\mbox{there exists some $A\in \o{M}$ with }\o{\mathcal{M}}\models \o{\Psi}(A,\o{a_2},\dots,\o{a_n})\\
&\Rightarrow
\o{\mathcal{M}}\models \exists X\o{\Psi}(X,\o{a_2},\dots,\o{a_n}).
\end{align*}
Therefore, the assertion holds for any negation-free formula.

\section{Basics on the combinatorics of Young tableaux}\label{sec:appB}

A box $B$ in a Young diagram is said to be placed in a \textit{corner} if there exists no box below nor on the right to $B$.
For a skew diagram $\lambda/\mu$, a corner of $\lambda$ is called an {\it outside corner} and a corner of $\mu$ is called an {\it inside corner}.

A location at where no box exists is called a {\it hole}.
For a skew tableau $T$ and an inside corner $b$, the {\it jeu de taquin slide starting from $b$} is defined as follows:
(i) Compare the two entries in the boxes below and on the right to the hole $b$, and slide a box with smaller number to $b$.
If these two entries are same, slide the box below $b$.
(ii) Compare the two entries in the boxes below and on the right to the new hole, and slide a box according to the same rule in (i).
(iii) Repeat (ii) until the hole reaches to an outside corner.

The following is an example of a jeu de taquin.
Here the grayed boxes denote the hole.
\[
\Tableau{ \bl& \bl& 1 & 3\\ \bl& \gray & 2 & 3\\ 1 & 2 & 3& 4\\ 2&4&5}
\qquad
\Tableau{ \bl& \bl& 1 & 3\\ \bl& 2 & 2 & 3\\ 1 & \gray  & 3&4\\ 2&4&5}
\qquad 
\Tableau{ \bl& \bl& 1 & 3\\ \bl& 2 & 2 & 3\\ 1 & 3  & \gray &4\\ 2&4&5}
\qquad 
\Tableau{ \bl& \bl& 1 & 3\\ \bl& 2 & 2 & 3\\ 1 & 3  & 4\\ 2&4&5}
\]
In this example, a jeu de taquin slide starts from the $2^\mathrm{nd}$ row, and ends at the $3^\mathrm{rd}$ row.

Let $T$ be a tableau and $t$ be a number.
The {\it row bumping} (or {\it row insertion}) of $t$ to $T$ is defined as follows:
(i) If $t$ is equal to or greater than all the entries in the $1^\mathrm{st}$ row of $T$, put a new box filled with $t$ at the end of this row.
If not, $t$ ``bumps'' the leftmost entry greater than $t$.
The bumped number proceeds to the next row.
(ii) Apply the same procedure as (i) to the next row and the bumped number.
(iii) Repeat (ii) until the bumped number is put at the end of some row.

Here is an example of a row bumping of 3 to a tableau.
\[
\Tableau{1&3&4&5&\bl \quad\la 3\\2&4&6&6\\4&5\\6}
\qquad
\Tableau{1&3&3&5\\2&4&6&6&\bl \quad\la 4\\4&5\\6}
\qquad
\Tableau{1&3&3&5\\2&4&4&6\\4&5&\bl& \bl &\bl \quad\la 6\\6}
\qquad
\Tableau{1&3&3&5\\2&4&4&6\\4&5&6\\6}.
\]
The tableau obtained by the row bumping of $t$ to $T$ is denoted by 
\[
T\la t\qquad \mbox{or}\qquad T\la \Tableau{t}.
\]

\bibliographystyle{amsplain}
\bibliography{JDTen}

\end{document}